\newtheorem{theorem}{Theorem}
\newtheorem{corollary}{Corollary}
\newtheorem{lemma}{Lemma}
\newtheorem{proposition}{Proposition}
\theoremstyle{definition}
\newtheorem{remark}{Remark}
\begin{document}

\title{Asymptotic behaviour of first passage time distributions for
subordinators}
\author{R. A. Doney\thanks{%
Department of Mathematics, University of Manchester, Manchester M13 9PL,
United Kingdom, email: ron.doney@manchester.ac.uk}~\ and V. Rivero\thanks{%
Centro de Investigacion en Matematicas A.C., Calle Jalisco s/n, 36240
Guanajuato, Me\-xi\-co, email: rivero@cimat.mx} \thanks{%
Research funded by the CONACYT Project \textit{Teor{\'\i }a y aplicaciones
de procesos de L{\'e}vy}} }
\maketitle

\abstract{In this paper we establish local estimates for the first passage
time of a subordinator under the assumption that it belongs to the Feller
class, either at zero or infinity, having as a particular case the
subordinators which are in the domain of attraction of a stable
distribution, either at zero or infinity.  To derive these results we first obtain uniform local estimates for the one dimensional distribution
of such a subordinator, which sharpen those obtained by Jain and
Pruitt~\cite{J+P}.  In the particular case of a subordinator in the domain
of attraction of a stable distribution our results are the analogue of the
results obtained by the authors in \cite{doney-rivero} for non-monotone
L\'evy processes.  For subordinators an approach different to that in 
\cite{doney-rivero} is necessary because the excursion techniques are not
available and also because typically in the non-monotone case the tail
distribution of the first passage time has polynomial decrease, while in the
subordinator case it is exponential.}

~\newline

\noindent\textbf{Keywords and phrases}: Subordinators, first passage time
distribution, local limit theorems, Feller class. ~\newline

\noindent\textbf{Mathematics subject classification}: {62E17, 60G51, 60F10}

\section{Introduction and main results}

Let $X$ be a subordinator, a stochastic process with non-decreasing c\`{a}dl%
\`{a}g paths with independent and stationary increments, with Laplace
exponent $\psi ,$ 
\begin{equation*}
-\frac{1}{t}\log \left( \mathbb{E}(\exp \{-\lambda X_{t}\})\right) =:\psi
(\lambda )=b\lambda +\int_{(0,\infty )}(1-e^{-\lambda x})\Pi (dx),\qquad
\lambda \geq 0,
\end{equation*}%
where $b$ denotes the drift and $\Pi $ the L\'{e}vy measure of $X.$ We will
write $\psi _{\ast }$ for the exponent of $\{X_{t}-bt,t\geq 0\},$ so that 
\begin{equation*}
\psi _{\ast }(\lambda ):=\psi (\lambda )-b\lambda ,\qquad \lambda \geq 0.
\end{equation*}

We are interested in determining the \textbf{local }asymptotic behaviour of
the distribution of $T_{x}=\inf \{t>0:X_{t}>x\}.$ More precisely, we would
like to establish estimates for the density function $h_{x}(t),$ (if it
exists: it does if $b=0),$ or more generally of 
\begin{equation*}
\mathbb{P}(T_{x}\in (t,t+\Delta ]),\quad
\end{equation*}%
uniformly for $\Delta $ in bounded sets and uniformly for $x$ in certain
regions, both as $t\rightarrow \infty $ or as $t\rightarrow 0$. Knowing the
first passage time distribution of a subordinator is of central importance
because of its applications in stochastic modeling and theoretical
probability, see for instance \cite{bertoinsubordinators}.

This paper is a continuation of recent research in \cite{doney-rivero},
where the same problem, in the $t\rightarrow \infty $ case, has been solved
for L\'{e}vy processes, excluding subordinators, that are in the domain of
attraction of a stable law without centering. The reasons for excluding
subordinators from that research were that the techniques used there rely
heavily on excursion theory for the reflected process, which in this case
does not make sense, and that in the subordinators case the rate of decrease
of the tail distribution of the first passage time is typically exponential,
while for other L\'{e}vy processes it is polynomial.

As can be seen in the paper~\cite{doney-rivero}, and in the present case,
the distribution of the first passage time has different behaviour according
to whether the process first crosses the barrier by a jump or continuously,
that is by \textit{creeping}. So, our results will describe the
contributions of these events to the first passage time distribution
separately. Of course if a subordinator has zero drift, it cannot creep, and
moreover the distribution of $T_{x}$ is absolutely continuous, so our
results become somewhat simpler in that case.

Our main results are quite complicated to state, in part because as well as
treating asymptotically stable subordinators, we consider the more general
case of \ stochastically compact subordinators. But we can illustrate our
results by looking at the case when $X$ is exactly stable with $\alpha =1/2.$
That means that $b=0$, $\Pi (dx)=cx^{-\frac{3}{2}}dx,$ and $\psi (\lambda )=%
\sqrt{\lambda }.$ In this case we know that $X_{t}$ is absolutely continuous
with density%
\begin{equation*}
f_{t}(x)=\frac{t}{\sqrt{2\pi x^{3}}}e^{-\frac{t^{2}}{2x}},
\end{equation*}%
and straightforward calculations give 
\begin{equation}
h_{x}(t)=\int_{0}^{x}\mathbb{P}(X_{t}\in dy)\overline{\Pi }(x-y)=\frac{1}{%
\sqrt{\pi x}}e^{-\frac{t^{2}}{2x}},  \label{h}
\end{equation}%
\begin{equation}
\mathbb{P}(T_{x}\in (t,t+\Delta ])\backsim \frac{t}{2x}(1-e^{-\frac{2x\Delta 
}{t}})h_{x}(t)\text{ uniformly as }\frac{t^{2}}{x}\rightarrow \infty .
\label{h2}
\end{equation}%
Even in this simple example there is something surprising: if $x\Delta
/t\nrightarrow 0$ the RHS of (\ref{h2}) is not asymptotic to $\Delta
h_{x}(t).$

The condition $\frac{t^{2}}{x}\rightarrow \infty $ is equivalent to $\mathbb{%
P}(T_{x}>t)=\mathbb{P}(X_{t}\leq x)\rightarrow 0,$ and it was shown in \cite%
{J+P} that the corresponding condition for a general subordinator is that $%
tH(\rho )\rightarrow \infty ,$ where $H(u)=\psi (u)-u\psi ^{\prime }(u),$
and $\rho $ is the unique solution of $\psi ^{\prime }(\rho )=x/t,$ so that
in our example $\rho =(\frac{t}{2x})^{2}$ and $H(\rho )=t/x$. Here $\rho $
is the parameter of the exponential change of measure which generates a
subordinator having mean $x/t,$ and we will mainly be interested in the case
that $x/t\rightarrow 0,$ when $\rho \rightarrow \infty ,$ or $x/t\rightarrow
\infty ,$ when $\rho \rightarrow 0.$ Note that the first case can arise with 
\textbf{either} $t\rightarrow \infty $ or $t\rightarrow 0,$ but the second
case can only arise with $t\rightarrow \infty .$

The variance of the transformed subordinator is $\sigma ^{2}(\rho ),$ where $%
\sigma ^{2}(u)=\psi ^{\prime \prime }(u),$ which is $2x^{3}/t^{3}$ in the
example. We therefore see that the above are special cases of 
\begin{equation}
f_{t}(x)\backsim \sqrt{\frac{1}{2\pi t\sigma ^{2}(\rho )}}e^{-tH(\rho )},
\label{s1}
\end{equation}%
\begin{equation}
h_{x}(t)\backsim \frac{\psi (\rho )}{\rho }f_{t}(x),  \label{s2}
\end{equation}%
and%
\begin{equation}
\mathbb{P}(T_{x}\in (t,t+\Delta ])\backsim \frac{1}{\rho }(1-e^{-\Delta \psi
(\rho )})f_{t}(x).  \label{s3}
\end{equation}%
Suppose now that $X$ is any driftless subordinator having $\overline{\Pi }%
(x) $ regularly varying as $x\rightarrow 0$ with index $-\alpha $ satisfying 
$0\,<\alpha <\,1,$ so that $X_{t}$ is asymptotically stable as $t\rightarrow
0.$ Then $X$ is absolutely continuous, and our first main result shows that (%
\ref{s1}) is valid as $x/t\rightarrow 0,$ both as $t\rightarrow 0$ and as $%
t\rightarrow \infty .$ (This significantly improves a result about the
behaviour of $\mathbb{P}(X_{t}\leq x)$ in \cite{J+P}.) Using the
representation for $h_{x}(t)$ in (\ref{h}) it is then straightforward to
verify that (\ref{s2}) holds. In principle, this must imply (\ref{s3}), but
it turns out to be easier to deduce (\ref{s3}) from (\ref{s1}), using the
formula 
\begin{equation*}
\begin{split}
& \mathbb{P}(T_{x}\in (t,t+\Delta ],X_{T_{x}}>x) \\
& =\int_{[0,x)}\mathbb{P}(X_{t}\in x-dy)\int_{[0,y]}U_{\Delta }(dz)\overline{%
\Pi }(y-z).
\end{split}%
\end{equation*}%
(Here $U_{\Delta }(dz)=\int_{0}^{\Delta }\mathbb{P}(X_{s}\in dz)ds,$ see
Lemma\ \ref{CJF}). Moreover if in this situation the subordinator has drift $%
b>0,$ it is a consequence of the results in \cite{griffinmaller} that%
\begin{equation}
\mathbb{P}(T_{x}\in dt,X(T_{x})=x)=bf_{t}(x)dt\text{ for }x>bt,  \label{cr}
\end{equation}%
see Lemma~\ref{CJF}. Since $X$ jumps over $x$ at time $t$ if $X-bt$ jumps
over $x-bt$ at time $t,$ we see that (\ref{s2}) holds for $x>bt$ with $\psi
(\rho )$ replaced by $\psi (\rho )-b\rho ,$ and it follows that 
\begin{equation}
\mathbb{P}(X\text{ creeps over }x|T_{x}=t)\backsim \frac{b\rho }{\psi (\rho )%
}\rightarrow \left\{ 
\begin{array}{ccc}
1, & \text{if} & \rho \rightarrow \infty , \\ 
\frac{b}{\psi ^{\prime }(0+)}, & \text{if} & \rho \rightarrow 0.%
\end{array}%
\right.  \label{conditional}
\end{equation}%
The case $t\rightarrow \infty $ and $x/t\rightarrow \infty $ is more
difficult: here the natural assumption is that $X_{t}$ is asymptotically
stable as $t\rightarrow \infty ,$ or equivalently that $\overline{\Pi }(x)$
is regularly varying as $x\rightarrow \infty $ with index $-\alpha $
satisfying $0\,<\alpha <\,1$ and $b\geq 0.$ Here $X$ is \textbf{not}
automatically absolutely continuous, and although we can get an estimate for 
$\mathbb{P}(X_{t}\in (x,x+\Delta ])$ analogous to (\ref{s1}), this is only
valid for $\Delta $ in compact\ sub-intervals of $(0,\infty ).$ The possible
singularity of $\overline{\Pi }(x-y)$ at $y=x$ then seems to make a direct
calculation based on (\ref{h}) impossible. Instead we exploit the fact that
the RHS of (\ref{h}) for fixed $t$ is a convolution and use the inversion
theorem for characteristic functions to establish (\ref{s2}) by an indirect
method. Unfortunately this requires us to make an additional assumption
which involves the behaviour of $\Pi $ near zero, (see (H) below), but
crucially this assumption does not necessitate $X$ being absolutely
continuous. When dealing with the creeping component we therefore cannot
rely on (\ref{cr}), and instead we look for an estimate of 
\begin{equation*}
h_{x}^{C}(t,\Delta ):=\mathbb{P}(T_{x}\in (t,t+\Delta ],X_{T_{x}}=x),
\end{equation*}%
for which we use the formula%
\begin{equation*}
\mathbb{P}(T_{x}\in (t,t+\Delta ],X_{T_{x}}=x)=b\int_{[0,x)}\mathbb{P}%
(X_{t}\in dy)u_{\Delta }(x-y).
\end{equation*}%
(Here $u_{\Delta }(z)dz=U_{\Delta }(dz)$; see Lemma\ \ref{CJF}). In the case
that $X$ is stable-$1/2$ plus a drift $b>0$ one can check directly that the
result is%
\begin{eqnarray}
h_{x}^{C}(t,\Delta ) &\backsim &\frac{b\rho }{\psi (\rho )}\mathbb{P}%
(T_{x}\in (t,t+\Delta ])\text{ and}  \label{P1} \\
h_{x}^{J}(t,\Delta ) &\backsim &\left( 1-\frac{b\rho }{\psi (\rho )}\right) 
\mathbb{P}(T_{x}\in (t,t+\Delta ]),  \label{P2}
\end{eqnarray}%
where the asymptotic behaviour of $\mathbb{P}(T_{x}\in (t,t+\Delta ])$ is
given by the RHS of (\ref{h2}) evaluated with $x$ replaced by $x-bt.$ As we
will see, (\ref{P1}) and (\ref{P2}) hold in the general case, as does (\ref%
{conditional}). We also have analogous results for the regime where $\rho $
is bounded away from zero and infinity and $t\rightarrow \infty $, where we
make no assumptions about $X$ other than (H) and that it is a strongly
non-lattice subordinator.

As we already mentioned, in the present work we allow a more general
behaviour than that of being in the domain of attraction of a stable law,
namely for most of our results we only require $X$ to be in the \textit{%
Feller class}, said otherwise to be \textit{stochastically compact}, either
at infinity or at zero depending on whether $x/t$ tends to $b$ from above,
or to $\mathbb{E}(X_{1})$ from below, or is bounded away from $b$ and $%
\mathbb{E}(X_{1}).$ (This class includes subordinators for which $\overline{%
\Pi }(\cdot )$ is O-regularly varying at zero and infinity, see e.g. \cite%
{BGT}.) A further difference from our work in \cite{doney-rivero} is that
the results here obtained apply equally to subordinators which are
stochastically compact with or without centering, while in \cite%
{doney-rivero} the assumption that the L\'{e}vy process is in the domain of
attraction of a stable law \textbf{without} centering is in force.

In order to provide precise definitions of these notions we start by
introducing some notation. We will write 
\begin{equation}
H(u)=\psi (u)-u\psi ^{\prime }(u),\qquad \sigma ^{2}(u)=\int_{0}^{\infty
}y^{2}e^{-u{y}}\Pi (dy),\qquad u\geq 0,
\end{equation}%
and for $x>0,$ 
\begin{align}
\overline{\Pi }(x)=\Pi (x,\infty )& ,\quad K_{\Pi }(x)=x^{-2}\int_{y\in
(0,x)}y^{2}\Pi (dy), \\
& Q_{\Pi }(x)=\overline{\Pi }(x)+K_{\Pi }(x).
\end{align}%
An elementary verification shows that 
\begin{equation}
Q_{\Pi }(z)=2z^{-2}\int_{0}^{z}y\overline{\Pi }(y)dy,\qquad z>0,  \label{qp}
\end{equation}%
and that $Q_{\Pi }$ is a non-increasing function. We define $\rho :\mathbb{R}%
^{+}\rightarrow \mathbb{R}^{+}$ via the relation 
\begin{equation*}
\psi ^{\prime }(\rho (s))=s,\qquad 0\leq b=\psi ^{\prime }(\infty )<s<\psi
^{\prime }(0+)=\mathbb{E}(X_{1})=:\mu \leq \infty .
\end{equation*}%
From this relation it is easily seen that $\rho (\cdot )$ is a
non-increasing function. For notational convenience for $b<x_{t}<\mu ,$ we
will write $\rho_{t}:=\rho \left( x_{t}\right) .$ Note that $\rho
_{t}\downarrow 0$ when $x_{t}\uparrow \mu $ and $\rho_{t}\uparrow \infty $
when $x_{t}\downarrow b.$

We will say that $X$ is in a \textit{Feller class} or is \textit{%
stochastically compact} at infinity, respectively at $0$, if

\begin{itemize}
\item[{\textbf{[SC]}}] $\displaystyle\limsup\frac{\overline{\Pi}(y)}{%
K_{\Pi}(y)}<\infty$ as $y\to\infty,$ respectively as $y\to 0{+}.$
\end{itemize}

It is known that this condition is equivalent to

\begin{itemize}
\item[{\textbf{[SC']}}] $\exists \alpha\in(0,2]$ and $c\geq 1$ such that $%
\displaystyle\limsup\frac{\int^{\lambda z}_{0}y\overline{\Pi}(y)dy}{%
\int^{z}_{0}y\overline{\Pi}(y)dy}\leq c\lambda^{2-\alpha}$ for $\lambda>1,$
as $z\to\infty,$ respectively as $z\to 0+;$
\end{itemize}

see \cite{Maller+MasonTAMS} for a proof of this equivalence and background
on the study of the Feller class for general L\'{e}vy processes. In this
case we will say that the condition $SC_{\infty }$, respectively $SC_{0},$
holds. We next quote some facts from the work by Maller and Mason in \cite%
{maller+masonat0} and \cite{Maller+MasonTAMS}. In the case where $X$ is
stochastically compact at infinity (respectively at zero), Maller and Mason
proved that there exist functions $\mathrm{c}:[0,\infty )\rightarrow
(0,\infty )$ and $\mathbf{b}:[0,\infty )\rightarrow \lbrack 0,\infty )$ such
that for any sequence $(t_{k},k\geq 0)$ tending towards infinity
(respectively, towards 0) there is a subsequence $(t_{k}^{\prime },k\geq 0)$
such that 
\begin{equation}  \label{seqlimit}
\frac{X_{t_{k}^{\prime }}-\mathbf{b}(t_{k}^{\prime })}{\mathrm{c}%
(t_{k}^{\prime })}\xrightarrow[k\to\infty]{\text{Law}}Y^{\prime },
\end{equation}%
where $Y^{\prime }$ is a real valued non-degenerate random variable, whose
law may depend on the subsequence taken. A standard representation of the
functions $\mathrm{c}$ and $\mathbf{b}$ are 
\begin{equation}
tQ_{\Pi }(\mathrm{c}(t))=1,\qquad \mathbf{b}(t)=t\left(b+\int_{0}^{\mathrm{c}%
(t)}y\Pi (dy)\right),\qquad t>0.
\end{equation}%
If in addition to the condition $SC_{\infty}$ (respectively $SC_{0}$) the
condition 
\begin{equation}  \label{SC00}
\limsup_{y\rightarrow \infty (y\to 0) }\frac{y(b+\int_{0}^{y}z\Pi (dz))}{%
\int_{0}^{y}z^{2}\Pi (dz)}<\infty ,
\end{equation}%
holds, then the above defined functions satisfy 
\begin{equation}
\limsup_{t\rightarrow \infty (t\rightarrow 0) }\frac{\mathbf{b}(t)}{c(t)}%
<\infty ,
\end{equation}
so that the normalizing function $\mathbf{b}$ is not needed and hence can be
assumed to be $0.$ In this case it is said that the process $X$ is
stochastically compact at zero (respectively at infinity) without centering.
In all other cases, 
\begin{equation}
\limsup_{t\rightarrow \infty (t\rightarrow 0) }\frac{\mathbf{b}(t)}{c(t)}%
=\infty .
\end{equation}

Throughout the paper we will work in one of the following frameworks on $\Pi
,$ $t$ and $x$: always $b<x_{t}:=x/t<\mu $ and

\begin{itemize}
\item[($SC_{0}$-I)] the L\'evy measure $\Pi$ satisfies the condition $%
SC_{0}, $ $t\to\infty,$ $x_{t}\to b;$

\item[($SC_{0}$-II)] the L\'evy measure $\Pi$ satisfies the condition $%
SC_{0},$ $t\to0,$ $x_{t}\to b,$ and 
\begin{equation*}
\frac{x-\mathbf{b}(t)}{c(t)}\xrightarrow[t\to 0]{}-\infty
\end{equation*}
if (\ref{SC00}) fails; or 
\begin{equation*}
\frac{x-bt}{c(t)}\xrightarrow[t\to 0]{}0
\end{equation*}
if (\ref{SC00}) holds.

\item[($SC_{\infty }$)] the L\'{e}vy measure $\Pi $ satisfies the condition $%
SC_{\infty },$ $t\rightarrow \infty ,$ $x_{t}\rightarrow \mu $ and 
\begin{equation*}
\frac{x-\mathbf{b}(t)}{c(t)}\xrightarrow[t\to \infty]{}-\infty
\end{equation*}%
if (\ref{SC00}) fails; or 
\begin{equation*}
\frac{x-bt}{c(t)}\xrightarrow[t\to \infty]{}0
\end{equation*}%
if (\ref{SC00}) holds.

\item[$(G)$] $t\rightarrow \infty $ and $b<\liminf_{t\rightarrow \infty }%
\frac{x}{t}\leq \limsup_{t\rightarrow \infty }\frac{x}{t}<\mu,$ and $X$ is
strongly non-lattice.
\end{itemize}

Furthermore, to work in the cases $(SC_{\infty})$ and $(G)$ we will need a
supplementary hypothesis

\begin{itemize}
\item[(H)] there exists a $t_{0}>0$ such that 
\begin{equation*}
\int_{1}^{\infty }\exp \left\{ -t_{0}\int_{0}^{\infty }(1-\cos (zy))\Pi
(dy)\right\} \frac{1+|\psi_{*} (-iz)|}{z}dz<\infty .
\end{equation*}
\end{itemize}

\begin{remark}
\begin{enumerate}
\item 

\item In the compound Poisson case it is clear that this condition cannot
hold, but in that case $\Pi $ is integrable at zero, so we can use a method
similar to that we use for the SC$_{0}$ cases.

\item The stronger condition%
\begin{equation}
\int_{1}^{\infty }\exp \left\{ -t_{0}\int_{0}^{\infty }(1-\cos (zy))\Pi
(dy)\right\} dz<\infty  \label{AC}
\end{equation}%
would imply that $X_{t}$ has an absolutely continuous distribution for each $%
t>0,$ but it is easy to see that (H) can hold without (\ref{AC}) holding.

\item {\ Using the elementary inequalities $1-\cos(y)\geq \frac{1}{\pi}%
y^{2}, $ for $-1<y<1,$ and $|\sin(y)|\leq 1\wedge |y|$, for $y\in \mathbb{R}%
, $ it can be verified that (H) holds whenever there exists a $t_{0}$ such
that 
\begin{equation*}
\int^{\infty}_{1}\exp\left\{-t_{0}z^{2}\int^{1/z}_{0}b^{2}\Pi(db)\right\} 
\left[\frac{1}{z}+\int^{1/z}_{0}\overline{\Pi}(a)da\right]dz<\infty.
\end{equation*}%
}
\end{enumerate}
\end{remark}

We start by providing some local estimates of the distribution of $X$ in the 
$(SC_{0})$ cases. These are a refinement of the pioneering work by Jain and
Pruitt~\cite{J+P}, which is one of the main sources of this research, where
estimates for $\mathbb{P}(T_{x}>t)=\mathbb{P}(X_{t}\leq x)$ are obtained.
The technique we use is different to that of Jain and Pruitt, though both
techniques involve normal approximations. Throughout this note $\phi:\mathbb{%
R}\to\mathbb{R}^{+},$ will denote the standard normal density.

\begin{theorem}
\label{thlocalXSC0} Suppose that $X$ is a subordinator which has drift $%
b\geq 0$ and L{\'{e}}vy measure $\Pi .$ For $b<x/t<\mu:=\mathbb{E}(X_{1}) ,$
define $x_{t}:=x/t$ and $\rho_{t}:=\rho (x/t)$, that is $\psi ^{\prime
}(\rho _{t})=x/t,$ $H(u)=\psi (u)-u\psi ^{\prime }(u),$ and $\sigma
^{2}(u)=\int_{0}^{\infty }y^{2}e^{-u{y}}\Pi (dy).$

\begin{itemize}
\item[(i)] If $X$ is stochastically compact at $0,$ the unidimensional law
of $X$ admits a density, say $\mathbb{P}(X_{t}\in dy)=f_{t}(y)dy,$ $y\geq 0,$
such that $f_{t}\in C^{\infty }(\mathbb{R})$ for each $t>0.$

\item[(ii)] In the settings ($SC_{0}$-(I-II)) we have the estimate 
\begin{equation}
\sqrt{t}\sigma (\rho_{t})f_{t}(z)=\left( \phi ((z-x)/\sqrt{t}\sigma (\rho
_{t}))+o(1)\right) e^{-tH(\rho_{t})}e^{\rho_{t}(z-x)},  \label{6a}
\end{equation}%
uniformly in $z>0$ and $x.$
\end{itemize}
\end{theorem}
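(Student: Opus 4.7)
\medskip\noindent
\textbf{Proof plan.}
For part (i) I would establish the existence and $C^\infty$ smoothness of $f_t$ by Fourier inversion. The modulus of the characteristic function equals $\exp\!\bigl(-t\int_0^\infty(1-\cos(\xi y))\Pi(dy)\bigr)$, and the elementary bound $1-\cos u\geq u^2/\pi$ for $|u|\leq 1$ gives
\begin{equation*}
\int_0^\infty(1-\cos(\xi y))\Pi(dy)\;\geq\;\pi^{-1}K_\Pi(1/|\xi|),\qquad|\xi|\geq 1.
\end{equation*}
Under $SC_0$, in the $(SC')$ form, $K_\Pi(1/|\xi|)$ grows at least like a positive power of $|\xi|$ as $|\xi|\to\infty$. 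Consequently $(1+|\xi|)^k|\mathbb E[e^{i\xi X_t}]|$ is integrable for every $k\geq 0$, and Fourier inversion yields $f_t\in C^\infty(\mathbb R)$.

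For part (ii) the main tool is the Esscher change of measure at rate $\rho_t$. Define $\mathbb P^{(\rho)}$ on $\mathcal F_t$ by $d\mathbb P^{(\rho)}/d\mathbb P=\exp(-\rho X_t+t\psi(\rho))$. Under $\mathbb P^{(\rho)}$, $X$ remains a subordinator with Laplace exponent $\psi^{(\rho)}(\lambda)=\psi(\rho+\lambda)-\psi(\rho)$, L\'evy measure $e^{-\rho y}\Pi(dy)$, mean $\psi'(\rho)$ and variance $\sigma^2(\rho)=-\psi''(\rho)$. The defining relation $\psi'(\rho_t)=x/t$ centres $X_t^{(\rho_t)}$ at $x$ with variance $t\sigma^2(\rho_t)$, and the identity $\rho_t x-t\psi(\rho_t)=-tH(\rho_t)$ produces the Esscher density formula
\begin{equation*}
f_t(z)\;=\;e^{\rho_t(z-x)-tH(\rho_t)}\,f_t^{(\rho_t)}(z).
\end{equation*}
Inserting this into (\ref{6a}), the claim reduces to the \emph{uniform local central limit theorem}
\begin{equation*}
\tau_t\,f_t^{(\rho_t)}(z)\;=\;\phi\!\left(\tfrac{z-x}{\tau_t}\right)+o(1),\qquad \tau_t:=\sqrt{t}\,\sigma(\rho_t),
\end{equation*}
uniformly in $z>0$ and in $x$ (equivalently, uniformly in the tilt parameter $\rho_t\to\infty$) within each of the frameworks $(SC_{0}$-I$)$ and $(SC_{0}$-II$)$.

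For this local CLT I would Fourier-invert $f_t^{(\rho_t)}$ and split the $\xi$-integral into the three ranges $|\xi|\leq A$, $A<|\xi|\leq \delta\tau_t$, and $|\xi|>\delta\tau_t$. On compacts, a second-order Taylor expansion of $\xi\mapsto t\psi^{(\rho_t)}(-i\xi/\tau_t)$ at $\xi=0$, using that $X_1$ under $\mathbb P^{(\rho_t)}$ has mean $x/t$ and variance $\sigma^2(\rho_t)$, shows that the characteristic function of $(X_t^{(\rho_t)}-x)/\tau_t$ converges pointwise to $e^{-\xi^2/2}$, with third-order remainder of size $t|\xi|^3\tau_t^{-3}\int_0^\infty y^3 e^{-\rho_t y}\Pi(dy)$. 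On the intermediate range, $1-\cos u\geq u^2/\pi$ combined with $\tau_t^2=t\sigma^2(\rho_t)$ produces a Gaussian majorant $e^{-c\xi^2}$ for the modulus. On the far tail, the same inequality together with the $SC_0$ scaling used in (i) gives superpolynomial decay. Dominated convergence then delivers the uniform CLT, whence (\ref{6a}) follows.

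The principal obstacle is achieving uniformity in the tilt parameter $\rho_t\to\infty$. This calls for quantitative scaling relations for the tilted moments, in particular a comparability of the form $\sigma^2(\rho)\asymp Q_\Pi(1/\rho)/\rho^2$ and an estimate of type $\int_0^\infty y^3 e^{-\rho y}\Pi(dy)=o(\rho^{-1}\sigma^2(\rho))$ as $\rho\to\infty$, so that the third-order Taylor remainder becomes $o(1)$ uniformly on compacts in $\xi$. Both follow from the $O$-regular variation of $Q_\Pi$ encoded in the $(SC')$ form of $SC_0$, applied via Tauberian-type arguments to the Laplace-tilted L\'evy measure. Producing these uniform moment estimates, and then verifying that the three Fourier pieces combine to give convergence simultaneously uniform in $z>0$ and in $\rho_t$, is the technically demanding step of the proof.
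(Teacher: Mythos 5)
Your overall strategy is the same as the paper's: for (i) you show that $\Re\psi(-i\xi)$ grows at least like a positive power of $|\xi|$ under $SC_0$ and then invoke Fourier inversion, and for (ii) you Esscher-tilt to the subordinator $Y$ with exponent $\psi(\rho_t+\cdot)-\psi(\rho_t)$, observe $f_t(z)=e^{\rho_t(z-x)-tH(\rho_t)}f_t^{(\rho_t)}(z)$, and reduce to a uniform local CLT for $Y_t$ with scale $\tau_t=\sqrt{t}\sigma(\rho_t)$; this is precisely the paper's scheme. The one genuine difference is how the local CLT is established: the paper writes $Y_t$ as a sum of $n=[x\rho_t]$ i.i.d.\ increments and applies Petrov's non-uniform density bound (Lemma~\ref{Petrov+D}), so the Berry--Esseen machinery delivers the uniformity in $(z,x)$ automatically once the third-moment quantity $L$ and the Fourier tail $\int_l^\infty|\Psi|$ are controlled, whereas you instead propose a direct three-range Fourier inversion of $f_t^{(\rho_t)}$ with Taylor expansion on compacts, a Gaussian majorant from $1-\cos u\geq u^2/\pi$ on the intermediate range, and the $SC_0$ stretched-exponential decay on the far tail. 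Both routes work and hinge on the same $SC_0$ scaling inputs ($\rho^2\sigma^2(\rho)\asymp H(\rho)\asymp Q_\Pi(1/\rho)$ and the O-regular variation of $Q_\Pi$); the paper's Petrov route packages the uniformity more cleanly, while yours is more self-contained but requires you to spell out the uniform Taylor error rather than citing a ready-made bound.

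One technical slip worth flagging: the moment estimate you want is
\begin{equation*}
\int_0^\infty y^3 e^{-\rho y}\Pi(dy)=O\!\left(\rho^{-1}\sigma^2(\rho)\right),\qquad\text{not }o\!\left(\rho^{-1}\sigma^2(\rho)\right),
\end{equation*}
and only the $O$-form is true under $SC_0$ (it follows from $\int_0^\infty y^3e^{-\rho y}\Pi(dy)\lesssim\rho^{-3}Q_\Pi(1/\rho)$ and $\sigma^2(\rho)\gtrsim\rho^{-2}K_\Pi(1/\rho)\asymp\rho^{-2}Q_\Pi(1/\rho)$). The $o$-version is generally false and is not needed: the normalized third-moment quantity the Taylor remainder produces is $t\tau_t^{-3}\int y^3e^{-\rho_t y}\Pi(dy)\lesssim\bigl(t\rho_t^2\sigma^2(\rho_t)\bigr)^{-1/2}\asymp\bigl(tH(\rho_t)\bigr)^{-1/2}$, and it is the uniform divergence $tH(\rho_t)\to\infty$ (Lemma~\ref{uniformityH}) that makes this $o(1)$, not any refinement of the moment bound. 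Adjusting that single claim, your plan is sound.
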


We now turn to the results for the passage time. We are interested in the
probability of $X$ passing above level $x$ in the time interval $(t,t+\Delta
],$ either by a jump or by creeping. The latter is positive only when the
drift $b>0.$ The latter and former probabilities will be denoted by 
\begin{equation*}
h_{x}^{C}(t,\Delta ):=\mathbb{P}(T_{x}\in (t,t+\Delta ],X_{T_{x}}=x),
\end{equation*}%
and 
\begin{equation*}
h_{x}^{J}(t,\Delta ):=\mathbb{P}(T_{x}\in (t,t+\Delta ],X_{T_{x}}>x),\qquad
t>0.
\end{equation*}%
First, in the settings ($SC_{0}$-(I-II)) we have by the forthcoming Lemma~%
\ref{CJF} that $h_{x}^{C}(t,\Delta )=\int_{t}^{t+\Delta }h_{x}^{C}(s)ds$ and 
\begin{equation*}
h_{x}^{C}(t)=bf_{t}(x),\qquad t,x>0,
\end{equation*}%
and by the above Theorem, 
\begin{equation*}
h_{x}^{C}(t)=bf_{t}(x)=\frac{b}{\sqrt{2\pi t}\sigma (\rho _{t})}e^{-tH(\rho
_{t})}(1+o(1))
\end{equation*}%
uniformly in $x.$ Furthermore, in all cases the expression 
\begin{equation*}
h_{x}^{J}(t)=\int_{0}^{x}\mathbb{P}(X_{t}\in dy)\overline{\Pi }(x-y),\qquad
t>0,
\end{equation*}%
is the density function of the first passage time on the event $X_{T_{x}}>x,$
see \cite{doney-rivero} Lemma 1. Then our main result is the following.

\begin{theorem}
\label{thm01} Assume we are in the settings ($SC_{0}$-(I-II)), $(SC_{\infty
})$ or $(G)$ and the hypothesis (H) is satisfied in the last two cases. Put $%
\psi _{\ast }(\lambda )=\psi (\lambda )-\lambda b=\int_{0}^{\infty
}(1-e^{-\lambda y})\Pi (dy).$ We have the following estimates 
\begin{equation}
h_{x}^{J}(t)=\frac{\psi _{\ast }(\rho _{t})}{\sqrt{2\pi t}\rho _{t}\sigma
(\rho _{t})}e^{-tH(\rho _{t})}\left( 1+o(1)\right) ,  \label{eq01}
\end{equation}%
\begin{equation}
h_{x}^{J}(t,\Delta )=\frac{\psi _{\ast }(\rho _{t})\left( 1-e^{-\Delta \psi
(\rho _{t})}\right) }{\psi (\rho _{t})\sqrt{2\pi t}\rho _{t}\sigma (\rho
_{t})}e^{-tH(\rho _{t})}\left( 1+o(1)\right) ,  \label{eq03}
\end{equation}%
\begin{equation}
h_{x}^{C}(t,\Delta )=\frac{b\rho _{t}\left( 1-e^{-\Delta \psi (\rho
_{t})}\right) }{\psi (\rho _{t})\sqrt{2\pi t}\rho _{t}\sigma (\rho _{t})}%
e^{-tH(\rho _{t})}\left( 1+o(1)\right) ,  \label{eq04}
\end{equation}%
uniformly in $x$ and uniformly for $0<$ $\Delta \leq \Delta _{0},$ for any
fixed $\Delta _{0}>0$.
\end{theorem}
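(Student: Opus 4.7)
The unifying idea is to reduce all three identities to Laplace transforms via the Esscher change of measure with parameter $\rho_t$. Under this tilt, $X$ becomes a subordinator of Laplace exponent $\psi(\cdot+\rho_t)-\psi(\rho_t)$ and mean $x/t$, so $X_t$ has mean $x$ and variance $t\sigma^2(\rho_t)$, and the deterministic factor $e^{-tH(\rho_t)}$ pops out of every quantity in the theorem. The remaining task is to show that, after tilting, the one-dimensional law of $X_t$ concentrates on the Gaussian scale $\sqrt{t}\sigma(\rho_t)$ around $x$, so that convolutions with $\overline{\Pi}$ and $U_{\Delta}$ collapse to pure Laplace transforms evaluated at $\rho_t$.

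In the $(SC_0)$ regimes I would start from $h_x^J(t)=\int_0^x f_t(x-y)\overline{\Pi}(y)dy$, insert the uniform expansion of Theorem~\ref{thlocalXSC0}, and exploit the fact that $e^{-\rho_t y}\overline{\Pi}(y)$ is essentially supported on $y$ of order $1/\rho_t$ while the Gaussian factor $\phi(-y/\sqrt{t}\sigma(\rho_t))$ varies on the scale $\sqrt{t}\sigma(\rho_t)$; one checks from the regime assumptions that $t\rho_t^2\sigma^2(\rho_t)\to\infty$, so that $\phi$ can be replaced by $\phi(0)=1/\sqrt{2\pi}$ up to $(1+o(1))$, and the surviving integral $\int_0^\infty e^{-\rho_t y}\overline{\Pi}(y)dy=\psi_{\ast}(\rho_t)/\rho_t$ produces (\ref{eq01}). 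For (\ref{eq03}) I substitute the density into the representation for $h_x^J(t,\Delta)$ given in the introduction and apply Fubini to obtain
\[
\int_0^\infty e^{-\rho_t y}\int_0^y U_{\Delta}(dz)\overline{\Pi}(y-z)\,dy=\frac{\psi_{\ast}(\rho_t)}{\rho_t}\cdot\frac{1-e^{-\Delta\psi(\rho_t)}}{\psi(\rho_t)},
\]
using the identity $\int_0^\infty e^{-\rho_t z}U_{\Delta}(dz)=\mathbb{E}\int_0^\Delta e^{-\rho_t X_s}ds=(1-e^{-\Delta\psi(\rho_t)})/\psi(\rho_t)$. Estimate (\ref{eq04}) emerges in exactly the same way from $h_x^C(t,\Delta)=b\int_0^x f_t(y)u_{\Delta}(x-y)dy$, now with only the single Laplace transform $\int_0^\infty e^{-\rho_t w}u_{\Delta}(w)dw$ appearing.

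In the regimes $(SC_\infty)$ and $(G)$ the one-dimensional law of $X_t$ may fail to be absolutely continuous and Theorem~\ref{thlocalXSC0} is unavailable, so I would replace the pointwise density argument by a Fourier-inversion argument applied to the smoothed convolutions $\overline{\Pi}\ast\mathbb{P}(X_t\in\cdot)$, $\overline{\Pi}\ast U_{\Delta}\ast\mathbb{P}(X_t\in\cdot)$ and $u_{\Delta}\ast\mathbb{P}(X_t\in\cdot)$. Tilting by $\rho_t$ again factorises $e^{-tH(\rho_t)}$ out; the Fourier transform of the tilted tail $e^{-\rho_t w}\overline{\Pi}(w)$ is $\psi_{\ast}(\rho_t-iz)/(\rho_t-iz)$, and absolute integrability at infinity of the full inversion integrand is precisely what hypothesis (H) supplies for $t\ge t_0$. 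A quadratic expansion of the tilted cumulant around $z=0$ produces the Gaussian kernel of variance $t\sigma^2(\rho_t)$; truncating outside a window of width of order $(\log t)/\sqrt{t}\sigma(\rho_t)$ and controlling the remainder through (H) recovers the same leading term $\psi_{\ast}(\rho_t)/(\sqrt{2\pi t}\rho_t\sigma(\rho_t))\cdot e^{-tH(\rho_t)}$. Estimates (\ref{eq03}) and (\ref{eq04}) then follow identically, with the Fourier transforms of $\overline{\Pi}\ast U_{\Delta}$ and $u_{\Delta}$ taking the role of that of $\overline{\Pi}$.

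The hard part will be the uniform Fourier-inversion step in $(SC_\infty)$ and $(G)$. Under $(SC_\infty)$ one has $\rho_t\to 0$, so the tilt contributes no damping at infinity and (H) alone must supply the integrability; one has to verify that the bound furnished by (H) is preserved under the Esscher transform uniformly in $\rho_t$; and the Gaussian approximation to the tilted cumulant must be sharp enough to yield the $(1+o(1))$ rate uniformly in $x$. Once this local convolution estimate is in place, the Laplace-transform reductions of the $(SC_0)$ paragraph carry over verbatim.
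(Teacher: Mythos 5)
Your plan matches the paper's argument in all essential respects: in the $(SC_0)$ regimes you substitute the uniform local density expansion of Theorem~\ref{thlocalXSC0} into the convolution formulas for $h_x^J(t)$, $h_x^J(t,\Delta)$, $h_x^C(t,\Delta)$ and reduce to the Laplace transforms $\psi_*(\rho_t)/\rho_t$, $(1-e^{-\Delta\psi(\rho_t)})/\psi(\rho_t)$; in $(SC_\infty)$ and $(G)$ you perform Fourier inversion on the tilted, smoothed convolutions with (H) supplying integrability of the inversion integrand, and a Berry--Ess\'een-type normal approximation of the tilted cumulant gives the Gaussian factor. The paper uses a truncation at $|z|\le 1/(4\Lambda)$ with $\Lambda$ the normalized third-moment ratio (rather than your $\log t$ window) and the Ess\'een inequality from Petrov, but these are implementational details and the structure you describe is exactly the paper's.
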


In the above estimates if $\Delta$ is bounded away from zero the term $%
(1-e^{-\Delta\psi(\rho_{t})})/\psi(\rho_{t})$ can be replaced by $\Delta$ or 
$1/\psi(\rho_{t}),$ according as $\rho_{t}\to0$ or $\infty.$

The above results can be applied when $X$ is in the domain of attraction of
a stable distribution of order $\alpha ,0<\alpha <1$. In this case we have
the asymptotic equivalence 
\begin{equation*}
\psi _{\ast }(\rho _{t})\backsim c_{\alpha }\overline{\Pi }(1/\rho _{t}),
\end{equation*}%
and this can be used to rephrase the results in .

\begin{corollary}
\label{corlocalT} Define a function $c$ by $t\overline{\Pi }(c(t))=1,$ $t>0. 
$ Then, in particular, the results in Theorem \ref{thm01} hold in the
following cases.

\begin{itemize}
\item[(i)] $t\rightarrow \infty /0$, $\frac{x}{t}\rightarrow b,$ $%
(x-tb)/c(t)\rightarrow 0,$ when $\overline{\Pi }(\cdot )\in RV(-\alpha )$ 
\textbf{at }$0$ with $\alpha \in (0,1).$

\item[(ii)] $t\rightarrow \infty ,$ $x/t\rightarrow \infty ,$ $%
x/c(t)\rightarrow 0,$ when $\overline{\Pi }(\cdot )\in RV(-\alpha )$ \textbf{%
at }$\infty $ with $\alpha \in (0,1),$ and (H) holds.
\end{itemize}
\end{corollary}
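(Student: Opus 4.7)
The plan is to verify that each of the two listed cases fits into one of the three frameworks $(SC_0\text{-I})$, $(SC_0\text{-II})$ or $(SC_\infty)$ of Theorem~\ref{thm01}; the corollary is essentially a specialization, so once the framework conditions are checked, the asymptotic formulas follow by direct application of that theorem.

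First I would translate the regular-variation hypothesis into Feller-class form. If $\overline{\Pi}\in RV(-\alpha)$ at $0$ (resp.\ at $\infty$) with $\alpha \in (0,1)$, Karamata's theorem yields $K_\Pi(x)\sim \frac{\alpha}{2-\alpha}\overline{\Pi}(x)$ and $Q_\Pi(x)\sim \frac{2}{2-\alpha}\overline{\Pi}(x)$. The first relation immediately produces the Feller condition [SC], placing us in $SC_0$ or $SC_\infty$. The second shows that the normalizing function $\mathbf{c}(t)$ of the general framework (defined by $tQ_\Pi(\mathbf{c}(t))=1$) and the $c(t)$ of the corollary (defined by $t\overline{\Pi}(c(t))=1$) differ only by an asymptotic positive multiplicative constant, and both are regularly varying in $t$ of index $1/\alpha$.

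Next I would verify the centering conditions. In case~(i), $x/t\to b$ is $x_t\to b$, placing us in $(SC_0\text{-I})$ when $t\to\infty$ and in $(SC_0\text{-II})$ when $t\to 0$; the hypothesis $(x-tb)/c(t)\to 0$, combined with the formula $\mathbf{b}(t)=bt+t\int_0^{\mathbf{c}(t)}y\Pi(dy)$ and the Karamata estimate $\int_0^x y\Pi(dy)\sim \frac{\alpha}{1-\alpha}x\overline{\Pi}(x)$, supplies the required control on $(x-\mathbf{b}(t))/c(t)$, with the \eqref{SC00} dichotomy resolved by the observation that \eqref{SC00} at $0$ holds iff $b=0$. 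In case~(ii), $\alpha<1$ forces $\mu=\psi'(0+)=\infty$, so $x/t\to\infty$ means $x_t\to\mu$; Karamata again makes \eqref{SC00} at infinity hold (the relevant ratio tends to $(2-\alpha)/(1-\alpha)$); and $(x-bt)/c(t)\to 0$ follows from $x/c(t)\to 0$ together with $c(t)/t\to\infty$, the latter because $c(t)$ is regularly varying of index $1/\alpha>1$ at infinity. Hypothesis~(H) is assumed explicitly, so $(SC_\infty)$ is in force and Theorem~\ref{thm01} applies.

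To cast the estimates in the regularly-varying form announced, I would then use the Abelian/Tauberian equivalence $\psi_*(\rho)\sim \Gamma(1-\alpha)\,\overline{\Pi}(1/\rho)$ (as $\rho\to\infty$ or $\rho\to 0$ according as $\overline{\Pi}$ is regularly varying at $0$ or at $\infty$), which follows from the representation $\psi_*(\rho)=\rho\int_0^\infty e^{-\rho y}\overline{\Pi}(y)\,dy$ and Karamata's Tauberian theorem for Laplace transforms, and substitute into the right-hand sides of \eqref{eq01}--\eqref{eq04}. The only real obstacle is the bookkeeping in the $t\to 0$, $b>0$ sub-case of (i), where \eqref{SC00} fails and $\mathbf{b}(t)-bt$ is of the same order as $c(t)$: one must unpack the framework's centering condition carefully to see that $(x-tb)/c(t)\to 0$ indeed suffices. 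The rest is routine verification.
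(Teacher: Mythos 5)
Your plan — reduce each item of the corollary to one of the frameworks of Theorem~\ref{thm01} — is the intended one, and most of your Karamata bookkeeping is correct: $K_\Pi(x)\sim\frac{\alpha}{2-\alpha}\overline{\Pi}(x)$ and $Q_\Pi(x)\sim\frac{2}{2-\alpha}\overline{\Pi}(x)$ yield $SC_0$/$SC_\infty$; $\mathrm{c}(t)$ and $c(t)$ agree up to a constant factor; in case (ii) one indeed has $\mu=\infty$, (\ref{SC00}) holds at infinity, and $(x-bt)/c(t)\to 0$ follows from $x/c(t)\to 0$ and $t=o(c(t))$; and your Tauberian identification $\psi_*(\rho)\sim\Gamma(1-\alpha)\overline{\Pi}(1/\rho)$ is exactly the asymptotic equivalence the paper quotes before the corollary.

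There is, however, a genuine gap in the sub-case you flagged as ``the only real obstacle'' — case (i) with $t\to 0$ and $b>0$ — and it does not resolve the way you suggest. You claim that unpacking the centering condition shows $(x-tb)/c(t)\to 0$ suffices, but if you actually compute, using $\mathbf{b}(t)-bt=t\int_0^{\mathrm{c}(t)}y\Pi(dy)$, $\mathrm{c}(t)\sim\kappa c(t)$, $\int_0^{\mathrm{c}(t)}y\Pi(dy)\sim\frac{\alpha}{1-\alpha}\mathrm{c}(t)\overline{\Pi}(\mathrm{c}(t))$ and $t\overline{\Pi}(\mathrm{c}(t))\to\kappa^{-\alpha}$, you find
$$\frac{\mathbf{b}(t)-bt}{c(t)}\;\longrightarrow\;\frac{\alpha}{1-\alpha}\kappa^{1-\alpha}\in(0,\infty),$$
so that $(x-\mathbf{b}(t))/c(t)$ tends to a \emph{finite negative constant}, not to $-\infty$. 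Thus the literal hypothesis of framework $(SC_0\text{-II})$ when (\ref{SC00}) fails is \emph{not} satisfied, and ``match the framework'' breaks down here. The fix is to look under the hood of Theorem~\ref{thm01}: the centering hypotheses of the frameworks are used only in Lemma~\ref{uniformityH} to guarantee $tH(\rho_t)\to\infty$, equivalently $\mathbb{P}(X_t\le x)\to 0$, uniformly. In the regularly varying case you are not restricted to the subsequential convergence (\ref{seqlimit}): $X_t-bt$, rescaled by $c(t)$, converges as $t\to 0$ to a strictly positive $\alpha$-stable law with continuous distribution function, so $(x-bt)/c(t)\to 0$ already gives $\mathbb{P}(X_t\le x)\to 0$, and the monotonicity argument of Lemma~\ref{uniformityH} supplies the uniformity. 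With that substitution the rest of your argument goes through; as stated it leaves the key step unverified (indeed asserts something false about the framework condition).
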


\begin{remark}
If (i) holds and $b>0,$ we see that $h_{x}^{J}(t,\Delta )/h_{x}^{C}(t,\Delta
)\rightarrow 0,$ but note in this scenario $X_{t}$ is not in the domain of
attraction of an $\alpha $-stable subordinator without centering as $%
t\rightarrow 0.$ If (ii) holds, and $b>0,$ $X_{t}$ is in the domain of
attraction of an $\alpha $-stable subordinator without centering as $%
t\rightarrow \infty ,$ and this ratio $\rightarrow \infty .$ In this
situation, our forthcoming final result shows that it is possible for
polynomial, rather than exponential decay to occur, but again this ratio $%
\rightarrow \infty $.
\end{remark}

\begin{proposition}
\label{prop:rvcase} Suppose now that $X$ is a strongly non-lattice
subordinator which has drift $b\geq 0$ and $\overline{\Pi }(\cdot )\in
RV(-\alpha )$ at $\infty $ with $\alpha \in (0,1),$ and (H) holds$.$ Define $%
c$ by $t\overline{\Pi }(c(t))=1,$ so that $(X(t\cdot )/c(t))$ converges
weakly to $S,$ a stable subordinator of index $\alpha .$ Let $\tilde{g}%
_{t}(\cdot )$ and $\tilde{h}_{x}(\cdot )$ denote the density functions of $%
S_{t}$ and $T_{x}^{S}:=\inf \{t:S_{t}>x\}$ respectively. Then uniformly for $%
y_{t}:=x/c(t)>0$ 
\begin{equation}
th_{x}^{J}(t)=\tilde{h}_{y_{t}}(1)+o(1)\text{ as }t\rightarrow \infty .
\label{plT1}
\end{equation}%
and, if $b>0,$ uniformly for $y_{t}>0$ and $0<\Delta <\Delta _{0},$ 
\begin{equation}
c(t)h_{x}^{C}(t,\Delta )=b\Delta \left( \tilde{g}_{1}\left( y_{t}\right)
+o(1)\right) \text{ as }t\rightarrow \infty .  \label{plT2}
\end{equation}
\end{proposition}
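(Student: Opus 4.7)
The plan is to exploit the functional convergence $\big(X_{ts}/c(t)\big)_{s\ge 0}\Rightarrow S$ together with the convolution representations already used in the paper, and for the creeping part a uniform local limit theorem obtained from (H). Writing $\overline{\nu}(u)=u^{-\alpha}$ for the L\'evy tail of $S$ (which is the limit of $t\overline{\Pi}(c(t)u)$ by regular variation and the normalization $t\overline{\Pi}(c(t))=1$), the analogue of the representation used for $h_x^J$ reads, for the stable subordinator,
\[
\tilde h_{y}(1)=\int_{0}^{y}\tilde g_1(z)(y-z)^{-\alpha}\,dz.
\]

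For (\ref{plT1}), rescaling $y=c(t)z$ in $h_x^J(t)=\int_{[0,x)}\mathbb P(X_t\in dy)\,\overline\Pi(x-y)$ produces
\[
t\,h_x^J(t)=\int_{[0,y_t)}\mu_t(dz)\,g_t(y_t-z),\qquad g_t(u):=t\overline\Pi(c(t)u),
\]
with $\mu_t$ the law of $X_t/c(t)$. Functional convergence gives $\mu_t\Rightarrow\mu_\infty$ (the law of $S_1$), while regular variation combined with Potter bounds gives $g_t(u)\to u^{-\alpha}$ locally uniformly on $(0,\infty)$ and a uniform envelope $g_t(u)\le C\max(u^{-\alpha-\delta},u^{-\alpha+\delta})$ for $t$ large. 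I then decompose the difference $t h_x^J(t)-\tilde h_{y_t}(1)$ as $\int\mu_t(dz)[g_t-g_\infty](y_t-z)+\int[\mu_t(dz)-\tilde g_1(z)dz]g_\infty(y_t-z)$ and isolate a small neighbourhood of the singular endpoint $z=y_t$: on the complement the weight is bounded and continuous, so weak convergence of $\mu_t$ and uniform convergence $g_t\to g_\infty$ close the estimate; on the neighbourhood the Potter envelope together with tightness of $\mu_t$ forces the contribution to be uniformly small. Uniformity in $y_t>0$ is then obtained by splitting into three ranges: $y_t$ in a compact subset of $(0,\infty)$ (covered uniformly by the above); $y_t\to 0$ (where Theorem~\ref{thm01} applies via Corollary~\ref{corlocalT}(ii) and is matched to the corresponding large-deviation asymptotic for $\tilde h_{y_t}(1)$); and $y_t\to\infty$ (where both sides are $o(1)$, by tightness on the subordinator side and the $O(y^{-\alpha-1})$ tail on the stable side).

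For (\ref{plT2}), when $y_t$ is bounded below one has $x>b(t+\Delta)$ for large $t$ (since $c(t)/t\to\infty$), so formula (\ref{cr}) gives $h_x^C(t,\Delta)=b\int_t^{t+\Delta}f_s(x)\,ds$. Under (H), Fourier inversion yields a uniform local limit theorem of the form $c(s)f_s(c(s)y)=\tilde g_1(y)+o(1)$ as $s\to\infty$, uniformly in $y\ge 0$. Combined with $c(s)/c(t)\to 1$ and $x/c(s)\to y_t$ uniformly on $s\in[t,t+\Delta_0]$, this gives $c(t)h_x^C(t,\Delta)=b\Delta(\tilde g_1(y_t)+o(1))$. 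The extreme regime $y_t\to 0$ is treated by distinguishing $x\le bt$ (where $h_x^C=0$) from $x>bt$: in both situations the super-exponential decay of $\tilde g_1$ at $0$ ensures both sides are $o(1)$. The principal obstacles are (i) establishing the uniform local limit theorem under (H) via Fourier inversion with an integrable envelope coming precisely from the integrand in (H); and (ii) securing uniformity of the jump estimate near the endpoints $y_t\to 0$ and $y_t\to\infty$, which requires combining Potter bounds, tightness of $\mu_t$, and Theorem~\ref{thm01}.
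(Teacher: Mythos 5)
Your proposal diverges from the paper's method and runs into two genuine obstacles that the paper specifically designs its approach to avoid.

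\textbf{The singularity in the jump part.} You write $t\,h_x^J(t)=\int_{[0,y_t)}\mu_t(dz)\,g_t(y_t-z)$ and claim the contribution from a small neighbourhood of the endpoint $z=y_t$ is controlled by ``the Potter envelope together with tightness of $\mu_t$.'' This does not work: tightness bounds only the global mass of $\mu_t$ and says nothing about the local concentration of $\mu_t$ near $y_t$, while the Potter envelope $g_t(u)\lesssim u^{-\alpha-\delta}$ is still singular at $0$. To control $\int_{y_t-\varepsilon}^{y_t}\mu_t(dz)\,g_t(y_t-z)$ uniformly you would need a uniform local bound such as $\mu_t([y_t-u,y_t])\le Cu$ for small $u$, i.e.\ a uniform bound on the concentration function or density of $X_t/c(t)$. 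No such bound is available from weak convergence alone, and the paper explicitly flags this: ``The possible singularity of $\overline{\Pi}(x-y)$ at $y=x$ then seems to make a direct calculation based on (\ref{h}) impossible.'' The paper's route --- Fourier inversion of the entire convolution
\begin{equation*}
t\,h_x^J(t)=\frac{t}{2\pi}\int_{-\infty}^{\infty}e^{-ixz}e^{-t\psi(-iz)}\frac{\psi_*(-iz)}{-iz}\,dz
\end{equation*}
followed by the split into $|z|\le Kc(t)$ and the tail controlled via (H) and the strongly non-lattice property --- is chosen precisely because it sidesteps this obstacle; (H) is exactly what is needed for integrability of the transform, without ever requiring a uniform local bound on $\mu_t$.

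\textbf{The creeping part relies on a density that need not exist.} For (\ref{plT2}) you invoke (\ref{cr}), i.e.\ $h_x^C(t,\Delta)=b\int_t^{t+\Delta}f_s(x)\,ds$, and then assert a uniform local limit theorem $c(s)f_s(c(s)y)=\tilde g_1(y)+o(1)$ ``under (H) via Fourier inversion.'' But Remark 1, item 3 of the paper explicitly notes that (H) can hold without the stronger condition (\ref{AC}), and only (\ref{AC}) guarantees that $X_t$ has an absolutely continuous law. Thus $f_s$ need not exist under the stated hypotheses, and (\ref{cr}) is not available. The paper instead starts from Lemma~\ref{CJF}'s identity $h_y^C(t,\Delta)=b\int_{[0,y]}\mathbb{P}(X_t\in dz)\,u_\Delta(y-z)$ (which uses only the potential density $u_\Delta$, available whenever $b>0$), takes the Fourier transform in $y$ to obtain
\begin{equation*}
c(t)h_x^C(t,\Delta)=\frac{bc(t)}{2\pi}\int_{-\infty}^{\infty}e^{-ixz}e^{-t\psi(-iz)}\frac{1-e^{-\Delta\psi(-iz)}}{\psi(-iz)}\,dz,
\end{equation*}
and proceeds as for the jump part. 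Replacing your use of (\ref{cr}) by this representation is essential, not merely a stylistic choice.

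In short, both halves of your argument rest on regularity (a uniform local bound on the law of $X_t$, respectively existence of $f_t$) that the hypotheses do not provide; the paper's Fourier-inversion strategy is the mechanism by which (H) substitutes for that missing regularity.
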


\section{Preliminaries}

For sake of conciseness we gather some useful formulas in the following
Lemma.

\begin{lemma}
\label{CJF} Let $X$ be a subordinator with Laplace exponent $\psi,$ drift $%
b\geq 0,$ and L\'evy measure $\Pi.$ We have the following facts.

\begin{itemize}
\item[(i)] $X$ creeps, viz. $\mathbb{P}(X_{T_{x}}=x)>0$ for some, and hence
for all, $x>0,$ if and only if $b>0.$ In that case, for any $0<t\leq \infty,$
the occupation measure 
\begin{equation*}
U_{t}(dy):=\mathbb{E}\left( \int_{0}^{t}ds1_{\{X_{s}\in dy\}}\right) ,\qquad
y\geq 0,
\end{equation*}%
has a continuous and bounded density on $(0,\infty ),$ $u_{t}(y),y>0.$ The
formula 
\begin{equation}
\mathbb{P}(T_{x}\in (t,t+\Delta ],X_{T_{x}}=x)=b\int_{[0,x)}\mathbb{P}%
(X_{t}\in dy)u_{\Delta }(x-y),  \label{creepinginterval}
\end{equation}%
holds for $x>0,$ $t\geq 0,$ $\Delta >0.$

\item[(ii)] On the event of non-creeping, $\{X_{T_{x}}>x\},$ the first
passage time distribution has a density given by 
\begin{equation*}
\mathbb{P}(T_{x}\in dt,X_{T_{x}}>x)=\left( \int_{[0,x)}\mathbb{P}(X_{t}\in
dy)\overline{\Pi }(x-y)\right) dt.
\end{equation*}
The formula 
\begin{equation}
\begin{split}
& \mathbb{P}(T_{x}\in (t,t+\Delta ],X_{T_{x}}>x) \\
& =\int_{[0,x)}\mathbb{P}(X_{t}\in dy)\int_{[0,x-y]}U_{\Delta }(dz)\overline{%
\Pi }(x-y-z),
\end{split}
\label{jumpinginterval}
\end{equation}%
holds for $x>0,$ $t\geq 0,$ $\Delta >0.$
\end{itemize}
\end{lemma}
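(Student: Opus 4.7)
The plan is to start from the representations in Lemma~\ref{CJF} and pass to the scaling limit by combining a uniform local limit theorem for $X_t/c(t)$ with the regular variation of $\overline{\Pi}$, identifying the limits via the corresponding Lemma~\ref{CJF} identities applied to the stable subordinator $S$. The main preparatory input is that under hypothesis (H), the characteristic function $\mathbb{E}[e^{izX_t}]$ is integrable (with appropriate weights) for $t$ large, so $X_t$ has a smooth density $f_t$; combining Fourier inversion with the weak convergence $X_t/c(t) \Rightarrow S_1$ and continuity of $\tilde g_1$ yields, in the style of Stone's local central limit theorem,
\[
\sup_{u \geq 0}\bigl|c(t)f_t(c(t)u) - \tilde g_1(u)\bigr| \to 0 \quad \text{as } t \to \infty.
\]
Simultaneously, regular variation gives $t\overline{\Pi}(c(t)v) \to v^{-\alpha}$ uniformly on $[\varepsilon,\infty)$ for each $\varepsilon>0$.

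For (\ref{plT1}), I would start from $h_x^J(t) = \int_0^x f_t(y)\overline{\Pi}(x-y)\,dy$ and substitute $y=c(t)u$ with $x=c(t)y_t$:
\[
t\,h_x^J(t) = \int_0^{y_t}\bigl(c(t)f_t(c(t)u)\bigr)\bigl(t\,\overline{\Pi}(c(t)(y_t-u))\bigr)\,du.
\]
The integrand converges to $\tilde g_1(u)(y_t-u)^{-\alpha}$, and the limit integral is exactly $\tilde h_{y_t}(1)$ by Lemma~\ref{CJF}(ii) applied to $S$, whose L\'evy tail is $v^{-\alpha}$. For (\ref{plT2}), I would substitute $z = x-y$ in $h_x^C(t,\Delta) = b\int_0^x f_t(x-z)u_\Delta(z)\,dz$. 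Fubini gives $\int_0^\infty u_\Delta(z)\,dz = U_\Delta([0,\infty)) = \Delta$, so $u_\Delta$ is an integrable weight of total mass $\Delta$; on the region $z \leq M$ (which carries almost all the mass once $M$ is large) we have $z/c(t)\to 0$, so $c(t)f_t(x-z) = \tilde g_1(y_t) + o(1)$ uniformly, yielding $c(t)h_x^C(t,\Delta) = b\Delta(\tilde g_1(y_t)+o(1))$.

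The main obstacle is uniformity in $y_t>0$. For $y_t$ in compact subsets of $(0,\infty)$ a dominated-convergence argument based on the uniform LLT and the uniform regular variation of $\overline{\Pi}$ suffices. The delicate regimes are $y_t\to 0$ and $y_t\to\infty$: in the first, both $\tilde g_1(y_t)$ and $\tilde h_{y_t}(1)$ tend to zero and one must match the rates, while also controlling the integral in Step~2 near $u=y_t$ (where $(y_t-u)^{-\alpha}$ is integrable but $t\overline{\Pi}(c(t)v)\to v^{-\alpha}$ is not uniform near $v=0$); in the second, both sides decay polynomially in $y_t$ and one must use the tail equivalence $\tilde g_1(u)\sim \alpha u^{-\alpha-1}$ together with uniform Karamata bounds on $\overline{\Pi}$ to produce matching lower and upper estimates for $th_x^J(t)$ and $c(t)h_x^C(t,\Delta)$. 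I expect these endpoint estimates to be the main technical burden; the bulk of the middle-regime argument is relatively routine once the local limit theorem is in hand.
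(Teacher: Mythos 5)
Your proposal does not address Lemma~\ref{CJF} at all: it takes the formulas of Lemma~\ref{CJF} as given inputs (``start from the representations in Lemma~\ref{CJF}'') and then derives the scaling-limit statements (\ref{plT1}) and (\ref{plT2}), which are the conclusions of Proposition~\ref{prop:rvcase}, not of Lemma~\ref{CJF}. In other words, you have written a (sketch of a) proof of a different result in the paper, one that logically \emph{depends} on Lemma~\ref{CJF}, so the argument is circular as a proof of the lemma itself.

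What Lemma~\ref{CJF} actually requires is: (a) the characterization that a subordinator creeps iff $b>0$, and the existence of a continuous, bounded density $u_t$ for the finite-horizon occupation measure $U_t$; (b) the identity $\mathbb{P}(T_x \in (t,t+\Delta], X_{T_x}=x) = b\int_{[0,x)}\mathbb{P}(X_t\in dy)\,u_\Delta(x-y)$; and (c) the analogous jump formula. None of this involves local limit theorems, regular variation, the stable limit $S$, or hypothesis (H). The paper's proof invokes Kesten's theorem (creeping iff $b>0$, with $U_\infty$ admitting a continuous bounded density $u_\infty$ and $\mathbb{P}(X_{T_x}=x)=bu_\infty(x)$), deduces absolute continuity and continuity of $u_t$ via $u_t(y)=u_\infty(y)-\int_{[0,y]}\mathbb{P}(X_t\in dz)\,u_\infty(y-z)$ and dominated convergence, uses the Griffin--Maller/Winkel result $\mathbb{P}(X_{T_x}=x,\,T_x\le t)=b\,\partial_y U_t[0,y]\big|_{y=x}$, establishes continuity in $x$ of $x\mapsto\mathbb{P}(X_{T_x}=x,\,T_x\le t)$ via a quasi-left-continuity argument to upgrade the a.e.\ identity to an everywhere one, and then applies the simple Markov property at time $t$ to get (\ref{creepinginterval}); part (ii) follows from the density formula for $h_x^J$ already proved in \cite{doney-rivero} combined with the same Markov-property decomposition. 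You would need to supply arguments of this kind; the local-limit/scaling machinery you outline is irrelevant to the lemma as stated.
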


\begin{proof}
The first claim in (i) in Lemma~\ref{CJF} is Kesten's \cite{kesten}
classical result, which states that when the drift $b>0,$ the potential measure $U_{\infty}$ is absolutely continuous with a continuous and bounded density $u_{\infty},$
\begin{equation*}
U_{\infty}(dy):=\mathbb{E}\left( \int_{0}^{\infty}ds1_{\{X_{s}\in dy\}}\right)=u_{\infty}(y)dy,\qquad y\geq 0,
\end{equation*}
and the following identity holds
\begin{equation}\label{kesten}\mathbb{P}(X_{T_{x}}=x)=bu_{\infty}(x),\qquad x>0.\end{equation} The absolute continuity of $U_{t}$ on $(0,\infty)$ and the bound $bu_{t}\leq 1,$ for any $t>0,$ follow from the fact that $U_{t}$ is dominated by above by $U_{\infty}$. We have furthermore the identity 
$$u_{t}(y)=u_{\infty}(y)-\int_{[0,y]}\mathbb{P}(X_{t}\in dz)u_{\infty}(y-z), \qquad y>0;$$ from where the continuity of $u_{t}$ in $(0,\infty)$ is deduced using the continuity of $u_{\infty}$ and the dominated convergence theorem.  

 In \cite{griffinmaller}, extending a result in \cite{winkel}, it has been proved that 
$$\mathbb{P}(X_{T_{x}}=x, T_{x}\leq t)=b\frac{\partial}{\partial y} U_{t}[0,y]|_{y=x}.$$ The latter together with Lebesgue's derivation theorem ensures that for a.e. $x>0$ $$\mathbb{P}(X_{T_{x}}=x, T_{x}\leq t)=bu_{t}(x).$$ Now, the claim will be obtained once we prove that for any $t>0,$ the function \begin{equation}\label{densityt}
x\mapsto\mathbb{P}(X_{T_{x}}=x, T_{x}\leq t),\qquad x>0,\end{equation} is continuous. An argument using the right-continuity and the quasi-left-continuity of $X$,  just as in the first paragraph of page 80 in~\cite{bertoinbook}, shows that for any fixed $t>0$ and $x>0,$ if $(x_{n}, n\geq 0)$ is a sequence that increases to $x,$ then we have both
$$\limsup_{n\to\infty}\mathbb{P}(X_{T_{x_{n}}}=x_{n}, X_{t}\geq x_{n})\leq \mathbb{P}(X_{T_{x}}=x, X_{t}\geq x),$$ and 
$$\limsup_{n\to\infty}\mathbb{P}(X_{T_{x_{n}}}=x_{n}, X_{t}< x_{n})\leq \mathbb{P}(X_{T_{x}}=x, X_{t}< x).$$ Similar inequalities are obtained if $(x_{n}, n\geq 0)$ is a sequence that decreases to $x.$ From the latter and the identity (\ref{kesten}) it is inferred that 
\begin{equation*}
\begin{split}
&\limsup_{n\to\infty}\mathbb{P}(X_{T_{x_{n}}}=x_{n}, X_{t}<x_{n})\\
&=\limsup_{n\to\infty}\left[\mathbb{P}(X_{T_{x_{n}}}=x_{n}) - \mathbb{P}(X_{T_{x_{n}}}=x_{n}, X_{t}\geq x_{n})\right]\\
&=\mathbb{P}(X_{T_{x}}=x)-\liminf_{n\to\infty}\mathbb{P}(X_{T_{x_{n}}}=x_{n}, X_{t}\geq x_{n})\\
&\leq \mathbb{P}(X_{T_{x}}=x, X_{t}<x)\\
&= \mathbb{P}(X_{T_{x}}=x)- \mathbb{P}(X_{T_{x}}=x, X_{t}\geq x).
\end{split}
\end{equation*}
Which leads to the inequality $$\mathbb{P}(X_{T_{x}}=x, X_{t}\geq x)\leq \liminf_{n\to\infty}\mathbb{P}(X_{T_{x_{n}}}=x_{n}, X_{t}\geq x_{n}).$$ The continuity of the function defined in (\ref{densityt}) is deduced therefrom. 

The proof of the formula (\ref{creepinginterval}) is obtained from the identity
\begin{equation*}
\begin{split}
&\mathbb{P}(T_{x}\in(t,t+\Delta], X_{T_{x}}=x)=\mathbb{P}(X_{t}<x, T_{x}\circ\theta_{t}\in(0,\Delta], X_{T_{x}}=x)\\
&=\int_{[0,x)}\mathbb{P}(X_{t}\in dy)\mathbb{P}(T_{x-y}\in(0,\Delta], X_{T_{x-y}}=x-y),
\end{split}
\end{equation*}
where $\theta_{t}$ denotes the shift operator, and we applied the simple Markov property at time $t$ to get the second equality.  Also, in the case where for any $s>0,$ $\mathbb{P}(X_{s}\in dy)=f_{s}(y)dy,$ with $f_{s}(y)$ a continuous function in $y$, we can take $$u_{t}(y)=\int^{t}_{0}ds f_{s}(y).$$ It follows from the formulas above that $T_{x}$ has a density on the event of creeping and $$\mathbb{P}(T_{x}\in dt, X_{T_{x}}=x)=bf_{t}(x),\qquad x>0, t>0.$$ 
The result in (ii) follows from the fact  
\begin{equation*}
h_{x}^{J}(t)=\int_{0}^{x}\mathbb{P}(X_{t}\in dy)\overline{\Pi }(x-y),\qquad t>0,
\end{equation*}
proved in \cite{doney-rivero}, together with an application of the Markov property as above.
\end{proof} 

Most of our calculations involve an exponential change of measure, which we
introduce now. For $\psi ^{\prime }(\infty )=b<\frac{x}{t}:=x_{t}<\mu =\psi
^{\prime }(0+)$ we denote by $(Y_{s},s\geq 0)$, a subordinator whose Laplace
exponent is 
\begin{equation}
\psi_{\rho_{t}}(\lambda )=\psi (\rho_{t}+\lambda )-\psi (\rho _{t})=b\lambda
+\int_{(0,\infty )}(1-e^{-\lambda y})e^{-\rho_{t}y}\Pi (dy),\qquad \lambda
\geq 0.
\end{equation}%
In particular we have the following relation: 
\begin{equation}
\mathbb{P}(Y_{t}\in dy)=e^{tH(\rho_{t})}e^{-\rho_{t}(y-tx_{t})}\mathbb{P}%
(X_{t}\in dy),\qquad y\in \mathbb{R}^{+}.  \label{abscont}
\end{equation}%
Observe that in the above definition of $Y$ we are deliberately excluding
the dependence in $x_{t}$ of $Y.$ We do this for notational convenience and
also because we will mainly use the equality of measures in (\ref{abscont}).

The proof of our main results rely on the following technical results. The
first of them relates various quantities we will consider.

\begin{lemma}
\label{lemma2} We have the following relations

\begin{itemize}
\item[(a)] $\displaystyle\frac{1}{2e}Q_{\Pi }(1/u)\leq H(u)\leq Q_{\Pi
}(1/u),$ for $u>0.$

\item[(b)] $u^{2}\sigma^{2}(u)\leq 2 H(u)$ for $u\geq 0.$

\item[(c)] $\displaystyle\frac{u^{2}\sigma ^{2}(u)}{H(u)}\geq \frac{e^{-1}}{%
\left( 1+\frac{\overline{\Pi }(1/u)}{K_{\pi }(1/u)}\right) },$ for $u>0.$ In
particular, if $X$ is stochastically compact at infinity, respectively at $%
0, $ then 
\begin{equation*}
\liminf \frac{u^{2}\sigma ^{2}(u)}{H(u)}>0,
\end{equation*}%
as $u\xrightarrow[]{}0,$ respectively as $u\xrightarrow[]{}\infty .$
\end{itemize}
\end{lemma}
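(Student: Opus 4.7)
The unifying observation is that, from $\psi(u)=bu+\int(1-e^{-uy})\,\Pi(dy)$ and $\psi'(u)=b+\int y e^{-uy}\,\Pi(dy)$, one gets the representation
$$H(u)=\psi(u)-u\psi'(u)=\int_{0}^{\infty}g(uy)\,\Pi(dy),\qquad \text{where }g(t):=1-(1+t)e^{-t}=\int_{0}^{t}s e^{-s}\,ds.$$
Since $g\geq 0$ and is non-decreasing, all three assertions will follow from elementary pointwise inequalities for $g$, integrated against $\Pi$ after splitting the range at $y=1/u$ (so that $t=uy$ runs through $[0,1]$ and $[1,\infty)$ respectively).

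For part (a), I would prove the two elementary bracketings
$$\tfrac{t^{2}}{2e}\mathbf{1}_{\{t\leq 1\}}+\bigl(1-2e^{-1}\bigr)\mathbf{1}_{\{t\geq 1\}}\;\leq\; g(t)\;\leq\; \tfrac{t^{2}}{2}\mathbf{1}_{\{t\leq 1\}}+\mathbf{1}_{\{t\geq 1\}},\qquad t\geq 0,$$
by using the identity $g(t)=\int_{0}^{t}se^{-s}\,ds$ together with $e^{-1}\leq e^{-s}\leq 1$ on $[0,1]$, and the monotonicity $g(t)\geq g(1)=1-2e^{-1}$ on $[1,\infty)$. Integrating the upper bound against $\Pi$ and using $K_{\Pi}(1/u)=u^{2}\int_{0}^{1/u}y^{2}\,\Pi(dy)$ yields $H(u)\leq \tfrac{1}{2}K_{\Pi}(1/u)+\overline{\Pi}(1/u)\leq Q_{\Pi}(1/u)$. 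Integrating the lower bound gives $H(u)\geq \tfrac{1}{2e}K_{\Pi}(1/u)+(1-2e^{-1})\overline{\Pi}(1/u)$, and since $1-2e^{-1}>1/(2e)$, the right-hand side dominates $(2e)^{-1}Q_{\Pi}(1/u)$.

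For part (b), the claim reduces to the single pointwise inequality $t^{2}e^{-t}\leq 2g(t)$ for $t\geq 0$. Differentiating confirms the identity
$$g(t)-\tfrac{t^{2}}{2}e^{-t}=\int_{0}^{t}\tfrac{s^{2}}{2}e^{-s}\,ds\geq 0,$$
and setting $t=uy$ and integrating against $\Pi$ gives $u^{2}\sigma^{2}(u)=u^{2}\int y^{2}e^{-uy}\,\Pi(dy)\leq 2H(u)$.

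For part (c), I would first rewrite the denominator as
$$1+\frac{\overline{\Pi}(1/u)}{K_{\Pi}(1/u)}=\frac{Q_{\Pi}(1/u)}{K_{\Pi}(1/u)},$$
so the claim becomes $u^{2}\sigma^{2}(u)\geq e^{-1}H(u)\,K_{\Pi}(1/u)/Q_{\Pi}(1/u)$. By (a), $H(u)\leq Q_{\Pi}(1/u)$, so it suffices to verify $u^{2}\sigma^{2}(u)\geq e^{-1}K_{\Pi}(1/u)$, which follows by restricting the integral defining $\sigma^{2}(u)$ to $(0,1/u]$, where $e^{-uy}\geq e^{-1}$. The final liminf statement is then immediate: $[\mathrm{SC}]$ says precisely that $\overline{\Pi}/K_{\Pi}$ is bounded in the relevant regime, so the right-hand side of (c) is bounded away from zero. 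The only place where one has to be a little careful is matching the constants in the lower half of (a); everything else is routine manipulation of the integrand $g(t)$.
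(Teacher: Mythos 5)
Your argument is correct throughout.  The identity $H(u)=\int_{0}^{\infty}g(uy)\,\Pi(dy)$ with $g(t)=1-(1+t)e^{-t}=\int_{0}^{t}se^{-s}\,ds$ is exact, the pointwise bracketing of $g$ on $[0,1]$ and $[1,\infty)$ is correct (and the needed numerical fact $1-2e^{-1}>\tfrac{1}{2e}$ holds), the identity $g(t)-\tfrac{t^{2}}{2}e^{-t}=\int_{0}^{t}\tfrac{s^{2}}{2}e^{-s}\,ds$ gives (b), and the chain $u^{2}\sigma^{2}(u)\geq e^{-1}K_{\Pi}(1/u)$ combined with $H(u)\leq Q_{\Pi}(1/u)$ gives (c); the $\liminf$ conclusion follows since the condition [SC] at $\infty$ (resp.\ $0$) bounds $\overline{\Pi}(1/u)/K_{\Pi}(1/u)$ as $u\to 0$ (resp.\ $u\to\infty$).

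The paper itself does not give a proof here --- it simply instructs the reader to ``proceed as in Lemma 5.1 of Jain and Pruitt'' --- so your write-up supplies the argument that the paper omits.  The route you take (represent $H$ as an integral of $g(uy)$ against $\Pi$, split at $y=1/u$, and compare $g$, $t^{2}/2$, and $t^{2}e^{-t}/2$ pointwise) is the standard one for this family of estimates and is almost certainly the same computation the authors have in mind; it has the advantage of making the constants $1/(2e)$, $2$, and $e^{-1}$ completely transparent.
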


\begin{proof}
Just proceed as in Lemma 5.1 in \cite{J+P}.
\end{proof}

\begin{lemma}
For $t>0,$ $b<x_{t}<\mu,$ we have for any $s>0$ 
\begin{align}  \label{lb4}
\mathbb{E}(Y_{s})& =sx_{t}=:\mu_{s}, \\
\mathbb{E}(Y_{s}-\mu_{s})^{2}& =s\int_{0}^{\infty }y^{2}e^{-\rho_{t}{y}}\Pi
(dy)=s\sigma ^{2}(\rho_{t}), \\
\mathbb{E}(Y_{s}-\mu_{s})^{3}& =s\int_{0}^{\infty }y^{3}e^{-\rho_{t}{y}}\Pi
(dy), \\
\mathbb{E}\left(|Y_{s}-\mu_{s}|^{3}\right)&\leq 6s(\rho_{t})^{-3}Q_{\Pi
}(1/\rho_{t})+2\mu _{s}s\sigma ^{2}(\rho_{t}).
\end{align}
\end{lemma}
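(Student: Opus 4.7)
The plan is to extract the first three moment identities directly from the Laplace exponent $\psi_{\rho_t}$ of $Y$ by differentiation under the integral, and then to reduce the bound on $\mathbb{E}|Y_s-\mu_s|^3$ to the already-computed signed third moment plus a correction term from the event $\{Y_s<\mu_s\}$.

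For the three moment identities, note that $\mathbb{E}(e^{-\lambda Y_s})=\exp\{-s\psi_{\rho_t}(\lambda)\}$ is smooth at $\lambda=0^+$. Differentiating
\[
\psi_{\rho_t}(\lambda)=b\lambda+\int_0^\infty(1-e^{-\lambda y})e^{-\rho_t y}\Pi(dy)
\]
under the integral (justified by dominated convergence since $\rho_t>0$) yields $\psi'_{\rho_t}(0+)=\psi'(\rho_t)=x_t$, $\psi''_{\rho_t}(0+)=-\sigma^2(\rho_t)$, and $\psi'''_{\rho_t}(0+)=\int_0^\infty y^3 e^{-\rho_t y}\Pi(dy)$. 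Three differentiations of $e^{-s\psi_{\rho_t}(\lambda)}$ at $\lambda=0^+$ then produce $\mathbb{E}(Y_s)=sx_t$, $\mathbb{E}(Y_s-\mu_s)^2=s\sigma^2(\rho_t)$ and, after expanding $(Y_s-\mu_s)^3$ and cancelling against $\mathbb{E}(Y_s^2)$ and $\mathbb{E}(Y_s)$, $\mathbb{E}(Y_s-\mu_s)^3=s\int_0^\infty y^3 e^{-\rho_t y}\Pi(dy)$. Equivalently these quantities are the first three cumulants of $Y_s$, which coincide with the mean and the two lower central moments.

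For the bound on $\mathbb{E}|Y_s-\mu_s|^3$, I split according to the sign of $Y_s-\mu_s$. A short computation gives
\[
\mathbb{E}|Y_s-\mu_s|^3 = \mathbb{E}(Y_s-\mu_s)^3 + 2\mathbb{E}\bigl[(\mu_s-Y_s)^3\mathbf{1}_{\{Y_s<\mu_s\}}\bigr].
\]
On $\{Y_s<\mu_s\}$, the inequality $0\le \mu_s-Y_s\le \mu_s$ (valid because $Y_s\ge 0$) yields $(\mu_s-Y_s)^3\le \mu_s(Y_s-\mu_s)^2$, so the correction term is at most $\mu_s s\sigma^2(\rho_t)$. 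It then remains to bound
\[
\int_0^\infty y^3 e^{-\rho_t y}\Pi(dy) \le 6\rho_t^{-3}Q_\Pi(1/\rho_t),
\]
which I would prove by splitting the range at $y=1/\rho_t$: on $[0,1/\rho_t]$ the trivial bounds $e^{-\rho_t y}\le 1$ and $y\le 1/\rho_t$ give $\rho_t^{-3}K_\Pi(1/\rho_t)$; on $(1/\rho_t,\infty)$ an integration by parts converts the integral into (a nonnegative piece subtracted from) $3\int y^2 e^{-\rho_t y}\overline{\Pi}(y)dy$, and the monotonicity of $\overline{\Pi}$ combined with the identity $Q_\Pi(z)=2z^{-2}\int_0^z y\overline{\Pi}(y)dy$ absorbs the contribution into a constant multiple of $\rho_t^{-3}Q_\Pi(1/\rho_t)$. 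The only obstacle is pinning down the explicit constant $6$, which is a routine book-keeping exercise of the kind carried out in Lemma~5.1 of Jain and Pruitt.
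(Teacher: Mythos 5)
Your argument tracks the paper's proof essentially step for step: the same decomposition $\mathbb{E}|Y_s-\mu_s|^3=\mathbb{E}(Y_s-\mu_s)^3+2\mathbb{E}\bigl[(\mu_s-Y_s)^3;Y_s\le\mu_s\bigr]$, the same bound $(\mu_s-Y_s)^3\le\mu_s(\mu_s-Y_s)^2$ on $\{Y_s\le\mu_s\}$ yielding the term $2\mu_s s\sigma^2(\rho_t)$ (note the factor $2$, which your text momentarily drops when you say the correction term is ``at most $\mu_s s\sigma^2(\rho_t)$''), and the same split of $\int_0^\infty y^3 e^{-\rho_t y}\Pi(dy)$ at $y=1/\rho_t$. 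The one place you deviate, the integration-by-parts bound on $(1/\rho_t,\infty)$, is more work than is needed: the paper simply applies the elementary inequality $u^3 e^{-u}\le 3!=6$ (from $e^u\ge u^3/3!$) to get $\int_{\{y\rho_t>1\}}y^3e^{-\rho_t y}\Pi(dy)\le 6\rho_t^{-3}\overline{\Pi}(1/\rho_t)$ at once, which also explains where the explicit constant $6$ comes from.
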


\begin{proof}
The first three identities are proved by bare hands calculations, while the
claimed upper bound is obtained as follows: 
\begin{equation*}
\begin{split}
\mathbb{E}\left(|Y_{s}-\mu_{s}|^{3}\right)& =\mathbb{E}\left((Y_{s}-\mu_{s})^{3}\right)+2\mathbb{E}\left( (\mu
_{s}-Y_{s})^{3}:Y_{s}\leq \mu_{s}\right) \\
& \leq \mathbb{E}\left((Y_{s}-\mu_{s})^{3}\right)+2\mu_{s}\mathbb{E}\left( (\mu_{s}-Y_{s})^{2}:Y_{s}\leq
\mu_{s}\right) \\
& \leq s\int_{0}^{\infty }y^{3}e^{-\rho_{t}{y}}\Pi (dy)+2\mu_{s}s\sigma
^{2}(\rho_{t}) \\
& =s\int_{\{y\rho_{t}\leq 1\}}y^{3}e^{-\rho_{t}{y}}\Pi (dy)+s\int_{\{y\rho
_{t}>1\}}y^{3}e^{-\rho_{t}{y}}\Pi (dy)+2\mu_{s}s\sigma ^{2}(\rho_{t}) \\
& \leq s(\rho_{t})^{-1}\int_{\{y\rho_{t}\leq 1\}}y^{2}\Pi (dy)+6s(\rho
_{t})^{-3}\overline{\Pi }(1/\rho_{t})+2\mu_{s}s\sigma ^{2}(\rho_{t}) \\
& \leq 6s(\rho_{t})^{-3}Q_{\Pi }(1/\rho_{t})+2\mu_{s}s\sigma ^{2}(\rho
_{t}).
\end{split}
\label{estimatefor3}
\end{equation*}\end{proof}

\begin{lemma}
\label{uniformityH} In the settings ($SC_{0}$-(I-II)), $(SC_{\infty})$ and $%
(G),$ we have that $tH(\rho_{t})\to\infty$ uniformly in $x$.
\end{lemma}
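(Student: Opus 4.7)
The proof splits into four cases matching the four settings.

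In case $(G)$, the result is immediate: $\rho_t$ varies in a fixed compact subset of $(0,\infty)$, on which $H$ is continuous and strictly positive, so $H(\rho_t)$ is bounded below by some $c_0>0$ and $tH(\rho_t)\geq c_0 t\to\infty$.

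Case $(SC_{0}\text{-I})$ is nearly as easy. The hypothesis $x_t\to b$ combined with the strict decrease of $\psi_{*}'$ to $0$ at infinity forces $\rho_t\to\infty$. Moreover $SC_{0}$ automatically forces $\Pi$ to have infinite total mass---were $\Pi$ finite, $K_{\Pi}(y)\leq\Pi(0,y)\to 0$ while $\overline{\Pi}(y)\to\Pi(0,\infty)>0$, contradicting the bounded ratio in $SC_{0}$. Hence $Q_{\Pi}(1/\rho_t)\geq\overline{\Pi}(1/\rho_t)\to\infty$, Lemma~\ref{lemma2}(a) gives $H(\rho_t)\to\infty$, and with $t\to\infty$ we conclude.

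The substance lies in $(SC_{0}\text{-II})$ and $(SC_{\infty})$, which I would treat in parallel. Starting from Lemma~\ref{lemma2}(a) combined with the defining relation $tQ_{\Pi}(c(t))=1$, one has
\[
tH(\rho_t)\ \geq\ \frac{1}{2e}\,\frac{Q_{\Pi}(1/\rho_t)}{Q_{\Pi}(c(t))}.
\]
The doubling inequality in $(SC')$ then reduces the problem, in both cases, to showing that $\rho_t\, c(t)\to\infty$. For this I would use the two-sided estimate $\psi_{*}'(\rho)\asymp Q_{\Pi}(1/\rho)/\rho$, available under $(SC)$ by splitting $\int y e^{-\rho y}\Pi(dy)$ at the natural threshold $y=1/\rho$ and controlling the two pieces respectively by $K_{\Pi}(1/\rho)/\rho$ and by $\overline{\Pi}(1/\rho)/\rho$ (the lower bound on the second piece requires the $(SC)$-doubling of $\overline{\Pi}$). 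Evaluating this at $\rho=1/c(t)$ yields $\psi_{*}'(1/c(t))\asymp c(t)Q_{\Pi}(c(t))=c(t)/t$, while $\psi_{*}'(\rho_t)=(x-bt)/t$ by the definition of $\rho_t$. The hypothesis $(x-bt)/c(t)\to 0$ (when $SC00$ holds) then forces $\psi_{*}'(\rho_t)=o(\psi_{*}'(1/c(t)))$, and the strict monotonicity of $\psi_{*}'$, combined with the doubling inherited by $\psi_{*}'$ from $(SC')$, delivers $\rho_t c(t)\to\infty$. Case $(SC_{\infty})$ is handled identically, though now $\rho_t\to 0$ and $c(t)\to\infty$, and one verifies $\rho_t c(t)\to\infty$ by the same scale comparison.

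The principal difficulty is the sub-case where $SC00$ fails. There the extra centering $\mathbf{b}(t)-bt=t\int_{0}^{c(t)}y\,\Pi(dy)$ is non-negligible compared to $c(t)$, so the simpler identity $\psi_{*}'(\rho_t)=(x-bt)/t$ no longer matches the scale $c(t)/t$ cleanly. One must then rewrite $(x-bt)/t=(x-\mathbf{b}(t))/t+\int_{0}^{c(t)}y\Pi(dy)$, bound the last integral using $(SC)$, and check that the hypothesis $(x-\mathbf{b}(t))/c(t)\to-\infty$ still translates into $\psi_{*}'(\rho_t)=o(\psi_{*}'(1/c(t)))$; this bookkeeping is the most delicate part of the argument.
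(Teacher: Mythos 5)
Your overall strategy is genuinely different from the paper's: the paper invokes Theorem 5.1 of Jain and Pruitt as a black box to convert the target $tH(\rho_t)\to\infty$ into $\mathbb{P}(X_t\leq x)\to 0$, and then deduces the latter from the weak limits supplied by stochastic compactness (together with the sequential convergence in (5)). You instead attempt a direct estimate via $tH(\rho_t)\geq\frac{1}{2e}Q_\Pi(1/\rho_t)/Q_\Pi(c(t))$ and the normalizer relation $tQ_\Pi(c(t))=1$. That is an attractive idea — it would make the lemma self-contained — and your $(G)$ case agrees with the paper, while your $(SC_0\text{-I})$ case (infinite mass of $\Pi$ plus $\rho_t\to\infty$ forces $H(\rho_t)\to\infty$) is a valid elementary alternative. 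However, there are real gaps in the remaining cases.

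First, the claimed two-sided estimate $\psi_*'(\rho)\asymp Q_\Pi(1/\rho)/\rho$ is not correct as you describe it. The upper bound on the first piece fails: $ye^{-\rho y}\leq\rho y^2$ is false for $y<1/\rho$, so $\int_0^{1/\rho}ye^{-\rho y}\Pi(dy)$ is \emph{not} controlled by $K_\Pi(1/\rho)/\rho$ — indeed in general $\int_0^z y\,\Pi(dy)$ can dominate $zK_\Pi(z)$. Also, the lower bound on the second piece does not follow from, and is not what $(SC)$ provides: $(SC)$ gives no lower decrease for $\overline{\Pi}$. Fortunately your argument only needs the lower bound $\psi_*'(\rho)\gtrsim K_\Pi(1/\rho)/\rho\gtrsim Q_\Pi(1/\rho)/\rho$ (the last step using $(SC)$), which does hold via $\int_0^{1/\rho}ye^{-\rho y}\Pi(dy)\geq e^{-1}\rho\int_0^{1/\rho}y^2\Pi(dy)$. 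You should state only what you need.

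Second, and more seriously, in the sub-case where $(SC00)$ fails your plan genuinely breaks down, and this is not merely "delicate bookkeeping." With the cut-off-at-$1/\rho$ lower bound you lose a fixed multiplicative constant $e^{-1}$, while the hypothesis $(x-\mathbf{b}(t))/c(t)\to-\infty$ only tells you that $\psi_*'(\rho_t)$ falls short of $\int_0^{c(t)}y\,\Pi(dy)$ by an amount $\gg c(t)/t$; but $\int_0^{c(t)}y\,\Pi(dy)$ can itself dominate $c(t)/t$ when $(SC00)$ fails, so the constant loss destroys the contradiction. One needs a different estimate, e.g.\ the elementary bound $\int_0^{c(t)}y\,\Pi(dy)-\psi_*'(M/c(t))\leq\int_0^{c(t)}y\bigl(1-e^{-My/c(t)}\bigr)\Pi(dy)\leq (M/c(t))\int_0^{c(t)}y^2\Pi(dy)=Mc(t)K_\Pi(c(t))\leq Mc(t)/t$, which avoids the constant loss entirely. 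This is a different ingredient, not mentioned in your plan.

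Third, you never address the \emph{uniformity in $x$} that the statement asserts. The paper handles this with a short but essential argument: $\lambda\mapsto H(\lambda)$ is increasing, hence $x\mapsto H(\rho(x/t))$ is decreasing, so a bound for the extremal admissible $x$ transfers to all smaller $x$ simultaneously. Your argument as presented, especially the ``forces $\rho_t c(t)\to\infty$'' step, is pointwise and subsequential and so does not yield uniformity without further work.
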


\begin{proof}
The proof of the case $(G)$ is a straightforward consequence of the fact
that in this setting $t\rightarrow \infty $ and 
\begin{equation*}
0<\liminf_{t\rightarrow \infty }H(\rho_{t})\leq \limsup_{t\rightarrow
\infty }H(\rho_{t})<\infty ,
\end{equation*}%
because $b<\liminf_{t\to\infty} x_{t}\leq \limsup_{t\to\infty} x_{t}<\mu,$ and hence $$0<\liminf_{t\to\infty}\rho_{t}\leq \limsup_{t\to\infty}\rho_{t}<\infty.$$ To deal with the cases ($SC_{0}$-(I-II)), $(SC_{\infty })$ we
use Theorem 5.1 of Jain and Pruitt \cite{J+P} which establishes that the
condition $tH(\rho_{t})\rightarrow \infty $ is equivalent to $\mathbb{P}(X_{t}\leq
x)\rightarrow 0,$ as $t\rightarrow \infty $ or $0.$ For the case $(SC_{0}-I)$
when (\ref{SC00}) fails, the equality 
\begin{equation*}
\mathbb{P}(X_{t}\leq x)=\mathbb{P}\left( \frac{X_{t}}{t}-b\leq x_{t}-b\right) ,
\end{equation*}%
and an application of the weak law of large numbers for subordinators gives
the result. To deal with the case $(SC_{0}-II),$ we use the equality 
\begin{equation*}
\mathbb{P}(X_{t}\leq x)=\mathbb{P}\left( \frac{X_{t}-\mathbf{b}(t)}{c(t)}\leq \frac{x-\mathbf{b
}(t)}{c(t)}\right) ,
\end{equation*}%
which together with the sequential convergence in (\ref{seqlimit}) and the
assumption that $\frac{x-\mathbf{b}(t)}{c(t)}\rightarrow -\infty $ as $%
t\rightarrow 0,$ lead to $\mathbb{P}(X_{t}\leq x)\rightarrow 0$ as $t\rightarrow 0.$
The case when (\ref{SC00}) holds as well as the cases $(SC_{\infty })$ are
proved with a similar argument. To show the uniformity observe that the
function 
\begin{equation*}
\lambda \mapsto H(\lambda )=\psi (\lambda )-\lambda \psi ^{\prime }(\lambda
)=\int_{(0,\infty )}(1-e^{-\lambda x}-\lambda xe^{-\lambda x})\Pi (dx),
\end{equation*}%
is increasing because the function $z\mapsto 1-e^{-z}-ze^{-z}$ is. This
implies that the function $\lambda \mapsto H(\rho (\lambda ))$ is
decreasing. The uniformity in the cases $(G)$ and ($SC_{0}$-(I-II)) follows
easily from this fact. Indeed, it is enough to observe that $tH(\rho_{t})$
tends towards $\infty $ as soon as we take a $x_{0}$ such that $x_{0}>x$ and 
$tH\left( \rho \left( \frac{x_{0}}{t}\right) \right) \rightarrow \infty .$
To establish the uniformity in the case $(SC_{\infty })$ when (\ref{SC00})
holds, we observe that the hypotheses imply that there is a function $D$
such that $x\leq \mathbf{b}(t)-D(t),$ and $D(t)/c(t)\rightarrow \infty $ as $%
t\rightarrow \infty .$ The function $D$ is such that 
\begin{equation*}
tH(\rho_{t})\geq tH\left( \rho \left( \frac{\mathbf{b}(t)-D(t)}{t}\right)
\right) \xrightarrow[t\to\infty]{}\infty ,
\end{equation*}%
because by the assumption of stochastic compactness at $\infty $ we have
that 
\begin{equation*}
\mathbb{P}(X_{t}<\mathbf{b}(t)-D(t))=\mathbb{P}\left( \frac{X_{t}-\mathbf{b}(t)}{c(t)}\leq -%
\frac{D(t)}{c(t)}\right) \xrightarrow[t\to\infty]{}0.
\end{equation*}%
In the case $(SC_{\infty }),$ when (\ref{SC00}), does not hold we proceed as
above but using that there is a function $j$ such that $x\leq bt+j(t)$ and $%
j(t)/c(t)\rightarrow 0$ as $t\rightarrow \infty .$
\end{proof}

\begin{lemma}
\label{lemma02} If (H) holds then 
\begin{equation*}
\begin{split}
& \int_{|z|>1}\left\vert \exp \{-t\left( \psi (\lambda -iz)-\psi (\lambda
)\right) \}\right\vert \left\vert \frac{\psi_{*}(\lambda -iz)}{\lambda -iz}%
\right\vert dz \\
& \leq e^{2t\psi_{*}(\lambda )}\int_{|z|>1}\left\vert \exp \{-t\left( \psi
(-iz)\right) \}\right\vert \frac{\psi_{*}(\lambda )+\left\vert \psi
_{*}(-iz)\right\vert }{z}dz<\infty ,
\end{split}%
\end{equation*}%
for any $\lambda >0$ and $t>{t}_{0}.$
\end{lemma}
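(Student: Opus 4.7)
The plan is to obtain the required inequality by controlling separately the exponential factor $|\exp\{-t(\psi(\lambda-iz)-\psi(\lambda))\}|$ and the rational factor $|\psi_{*}(\lambda-iz)/(\lambda-iz)|$, and then to deduce finiteness from hypothesis (H).

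For the exponent, I would use the Lévy-Khintchine form to compute
\[
\psi(\lambda-iz)-\psi(\lambda) = -ibz + \int_{0}^{\infty}e^{-\lambda y}(1-e^{izy})\,\Pi(dy),
\]
whose real part is $\int_{0}^{\infty}e^{-\lambda y}(1-\cos(zy))\,\Pi(dy)$. Since $\mathrm{Re}\,\psi(-iz)=\int_{0}^{\infty}(1-\cos(zy))\,\Pi(dy)$ and $0\le 1-e^{-\lambda y}\le 1$, the difference between these two real parts is at most $2\int_{0}^{\infty}(1-e^{-\lambda y})\,\Pi(dy)=2\psi_{*}(\lambda)$. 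Multiplying by $-t$ and exponentiating gives $|\exp\{-t(\psi(\lambda-iz)-\psi(\lambda))\}|\le e^{2t\psi_{*}(\lambda)}\,|e^{-t\psi(-iz)}|$, which matches the exponential factor in the stated bound. For the rational factor, I would use the alternative splitting
\[
\psi_{*}(\lambda-iz) = \psi_{*}(-iz) + \int_{0}^{\infty}e^{izy}(1-e^{-\lambda y})\,\Pi(dy),
\]
so the triangle inequality yields $|\psi_{*}(\lambda-iz)|\le|\psi_{*}(-iz)|+\psi_{*}(\lambda)$; combined with $|\lambda-iz|\ge|z|$, this completes the pointwise upper bound and thus the first inequality.

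For the second, finiteness part, I would use that for $t\ge t_{0}$ the non-negativity of $\int(1-\cos(zy))\,\Pi(dy)$ implies $|e^{-t\psi(-iz)}|\le\exp\{-t_{0}\int_{0}^{\infty}(1-\cos(zy))\,\Pi(dy)\}$. Both this exponential and $|\psi_{*}(-iz)|$ are even in $z$, so the integral over $|z|>1$ reduces to twice the integral over $z>1$, which is finite by hypothesis (H) once one absorbs the constant $\max(\psi_{*}(\lambda),1)$ that matches $\psi_{*}(\lambda)+|\psi_{*}(-iz)|$ to $1+|\psi_{*}(-iz)|$.

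The only real subtlety is recognizing that two different algebraic splittings of $1-e^{-(\lambda-iz)y}$ are needed: one matched to the integrand of $\psi(-iz)$ with error controlled by $\psi_{*}(\lambda)$ (for the exponential factor), and a separate one isolating $\psi_{*}(-iz)$ (for the rational factor). Once these are chosen, the remainder of the argument is a routine triangle-inequality and majoration application of (H).
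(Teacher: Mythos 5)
Your proposal is correct and follows essentially the same route as the paper: bound $\Re\bigl(\psi(\lambda-iz)-\psi(\lambda)\bigr)$ from below by $\Re\psi(-iz)-2\psi_{*}(\lambda)$ using $0\le 1-\cos(zy)\le 2$, and bound $|\psi_{*}(\lambda-iz)|$ by $|\psi_{*}(-iz)|+\psi_{*}(\lambda)$ via the identity $\psi_{*}(\lambda-iz)-\psi_{*}(-iz)=\int_{0}^{\infty}(1-e^{-\lambda y})e^{izy}\,\Pi(dy)$, then invoke (H). These are exactly the two inequalities the paper uses.
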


\begin{proof}The proof of this result is an easy consequence of the two inequalities:
\begin{equation*}
\begin{split}
& \mathcal{R}\left( \psi (\lambda -iz)-\psi (\lambda )\right) \\
& =\int_{(0,\infty )}(1-\cos (zy))e^{-\lambda y}\Pi (dy) \\
& =\int_{(0,\infty )}(1-\cos (zy))\Pi (dy)-\int_{(0,\infty )}(1-\cos
(zy))\left( 1-e^{-\lambda y}\right) \Pi (dy) \\
& \geq \int_{(0,\infty )}(1-\cos (zy))\Pi (dy)-2\int_{(0,\infty )}\left(
1-e^{-\lambda y}\right) \Pi (dy) \\
& =\mathcal{R}\psi (-iz)-2\int_{(0,\infty )}\left( 1-e^{-\lambda y}\right)
\Pi (dy);
\end{split}%
\end{equation*}%
and
\begin{equation*}
\begin{split}
& |\left( \psi_{*}(\lambda -iz)-\psi_{*}(-iz)\right) | \\
& =|\int_{(0,\infty )}(1-e^{-\lambda y})e^{izy}\Pi (dy)|\leq \int_{(0,\infty
)}(1-e^{-\lambda y})\Pi (dy)=\psi_{*}(\lambda ).
\end{split}
\end{equation*}
\end{proof}

\section{Proof of Theorem~\protect\ref{thlocalXSC0} and Theorem~\protect\ref%
{thm01} in the $SC_{0}$ cases}

\subsection{Proof of (i) in Theorem~\protect\ref{thlocalXSC0}}

Let $U(x)=\int_{0}^{x}y\overline{\Pi }(y)dy,$ $x\geq 0,$ and 
\begin{equation*}
\widehat{U}(s)=\int_{0}^{\infty }e^{-sy}U(dy)=s\int_{0}^{\infty
}e^{-sy}U(y)dy,\quad s>0,
\end{equation*}
be its Laplace transform. By hypothesis we have that the condition ($%
SC^{\prime}_{0}$) is satisfied, which implies that $U$ has bounded increase
at $0,$ see \cite{BGT} page 68. So by Theorem 2.10.2 in \cite{BGT}, actually
from its proof, we know that there are constants $0<c_{1}\leq c_{2}<\infty $
such that for small $s,$ $c_{1}U(s)\leq \widehat{U}(1/s)\leq c_{2}U(s).$
From this it follows that $\widehat{U}$ has bounded decrease at infinity.
Indeed, taking $\alpha $ as in ($SC_{0}^{\prime }$) we have for $\lambda >1$ 
\begin{equation*}
\liminf_{s\rightarrow \infty }\frac{\widehat{U}(s\lambda )}{\widehat{U}(s)}%
\geq c_{3}\liminf_{s\rightarrow \infty }\frac{U\left( \frac{1}{s\lambda }%
\right) }{U\left( \frac{1}{s}\right) }=c_{3}\left( \limsup_{v\rightarrow 0+}%
\frac{U(\lambda v)}{U(v)}\right) ^{-1}\geq c_{3}\frac{1}{\lambda ^{2-\alpha }%
}.
\end{equation*}
Proposition 2.2.1 in \cite{BGT} implies that for any $\beta <-(2-\alpha )$
there exist constants $c_{4}>0$ and $\widetilde{\ell }>0,$ such 
\begin{equation}
\frac{\widehat{U}(y)}{\widehat{U}(x)}\geq c_{4}\left( \frac{y}{x}\right)
^{\beta },\qquad y\geq x\geq \widetilde{\ell }.  \label{eqLTU}
\end{equation}%
Also, an easy integration by parts implies the identity 
\begin{equation}
\widehat{U}(s)=-\left( \frac{\psi (s)}{s}\right) ^{\prime }=\frac{H(s)}{s^{2}%
},\qquad s>0.
\end{equation}%
So, by (\ref{eqLTU}) we have 
\begin{equation}  \label{eq:29}
\frac{H(y)}{H(x)}\geq c_{4}\left( \frac{y}{x}\right) ^{2+\beta },\qquad
y\geq x\geq \widetilde{\ell },
\end{equation}%
where $2+\beta <\alpha \leq 2.$ Since $0<\alpha $ there exists $\beta_{0}$
and positive constants $c_{6}$ and $\widetilde{\ell }$ such that $0<2+\beta
_{0}<\alpha \leq 2$ and 
\begin{equation*}
H(y)\geq y^{2+\beta_{0}}c_{6},\qquad y\geq \widetilde{\ell }.
\end{equation*}%
To conclude we observe that the following inequalities hold 
\begin{equation}  \label{eq:30}
\begin{split}
\int_{0}^{\infty }(1-\cos (\theta y))\Pi (dy)& \geq c_{7}\theta
^{2}\int_{0}^{1/\theta }y^{2}\Pi (dy) \\
& =c_{7}K_{\Pi }(1/\theta )=c_{7}\frac{K_{\Pi }(1/\theta )}{Q_{\Pi
}(1/\theta )}Q_{\Pi }(1/\theta ) \\
& \geq c_{8}Q_{\Pi }(1/\theta ) \\
& =2c_{8}\theta ^{2}U(1/\theta ) \\
& \geq c_{9}H(\theta )
\end{split}%
\end{equation}%
for $\theta $ large enough; here we used the assumption $(SC_{0})$ and the
equality (\ref{qp}). We infer that for $\theta >0$ large enough 
\begin{equation*}
\Re (\psi (i\theta ))=\int_{0}^{\infty }(1-\cos (\theta y))\Pi (dy)\geq
c_{10}\theta ^{\beta_{0}+2}.
\end{equation*}%
As a consequence, for $n\geq 0$ 
\begin{equation*}
\int_{\mathbb{R}}|\theta |^{n}|\mathbb{E}(e^{i\theta X_{t}})|d\theta =\int_{%
\mathbb{R}}|\theta |^{n}\exp \{-t\Re (\psi (i\theta ))\}d\theta <\infty ,
\end{equation*}%
and the conclusion follows from standard results.

\subsection{Proof of (ii) in Theorem~\protect\ref{thlocalXSC0}}

Before we start with the proof we state a further auxiliary theorem. This is
a consequence of Lemma 1, P109, in \cite{petrov}.

\begin{lemma}
\label{Petrov+D} Let $Z_{1,}Z_{2},\cdots Z_{n}$ be independent rvs having
finite 3rd moments, write $\mathbb{E}(Z_{r})=\mu_{r},Var(Z_{r})=%
\sigma_{r}^{2},$ and $\mathbb{E}\left(|Z_{r}-\mu_{r}|^{3}\right)=\nu_{r},$
and put $W=\sum_{_{1}}^{n}Z_{r},m=\mathbb{E}(W)=\sum_{_{1}}^{n}\mu_{r},$ and 
$s^{2}=VarW=\sum_{_{1}}^{n}\sigma_{r}^{2}$. Assume further that $%
\int_{-\infty }^{\infty }|\Psi (u)|du<\infty ,$ where $\Psi (u)=\mathbb{E}%
\left(e^{iuW}\right),$ and denote by $f$ and $\phi $ the pdf of $W$ and the
standard Normal pdf$.$ Then there is an absolute constant $A$ such that%
\begin{equation}
\sup_{y}\left|f(y)-s^{-1}\phi \left(\frac{y-m}{s}\right)\right|\leq AL+d,
\label{2}
\end{equation}%
where $\ L=\sum_{1}^{n}\nu_{r}/s^{4}$ and, with $l=(4Ls^{2})^{-1},$%
\begin{equation*}
d=2\int_{l}^{\infty }|\Psi (u)|du.
\end{equation*}
\end{lemma}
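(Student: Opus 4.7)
The plan is to reduce to Petrov's Lemma~1 on page~109 of \cite{petrov} by Fourier inversion and a dyadic split of the frequency domain. Since $\Psi\in L^{1}$, the random variable $W$ has a bounded continuous density given by inversion, $f(y)=(2\pi)^{-1}\int e^{-iuy}\Psi(u)\,du$, while the Gaussian density $s^{-1}\phi((y-m)/s)$ has characteristic function $e^{ium-u^{2}s^{2}/2}$. Subtracting the two inversion formulas and taking the supremum in $y$ yields
\begin{equation*}
\sup_{y}\left|f(y)-s^{-1}\phi\!\left(\tfrac{y-m}{s}\right)\right|\leq \frac{1}{2\pi}\int_{-\infty}^{\infty}\bigl|\Psi(u)-e^{ium-u^{2}s^{2}/2}\bigr|\,du,
\end{equation*}
which is the density-version smoothing inequality underlying Petrov's lemma.

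Next I split this integral at $|u|=l=(4Ls^{2})^{-1}$. On the inner region $|u|\leq l$ I invoke the classical Esseen-type estimate on the characteristic function of a sum with finite third moments (Petrov, Chapter~V, Lemma~5): applied to the centered summands $Z_{r}-\mu_{r}$ and rescaled by $t=us$ (which turns $\sum\nu_{r}/s^{3}=sL$ into the standard normalization $L_{n}$), one obtains
\begin{equation*}
\bigl|\Psi(u)e^{-ium}-e^{-u^{2}s^{2}/2}\bigr|\leq 16\,Ls^{4}|u|^{3}e^{-u^{2}s^{2}/3},\qquad |u|\leq l.
\end{equation*}
Integrating $|u|^{3}e^{-u^{2}s^{2}/3}$ over $\mathbb{R}$ gives $9/s^{4}$, so the inner part contributes at most a universal constant multiple of $L$.

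For the outer region $|u|>l$ the triangle inequality splits the integrand into $|\Psi(u)|$ and $e^{-u^{2}s^{2}/2}$. The Hermitian symmetry $|\Psi(-u)|=|\Psi(u)|$ makes the first piece integrate to exactly $d=2\int_{l}^{\infty}|\Psi(u)|\,du$. For the Gaussian piece, the standard tail bound $\int_{l}^{\infty}e^{-u^{2}s^{2}/2}\,du\leq (ls^{2})^{-1}e^{-l^{2}s^{2}/2}\leq 4L$ (using $ls^{2}=1/(4L)$ and $e^{-l^{2}s^{2}/2}\leq 1$) absorbs into the $AL$ term after enlarging $A$. Combining the three pieces, merging the factor $(2\pi)^{-1}$ into $A$, yields the claimed bound $\sup_{y}|f(y)-s^{-1}\phi((y-m)/s)|\leq AL+d$.

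The only genuinely nontrivial ingredient is the characteristic-function estimate on the inner region, which is precisely what Petrov's lemma supplies; the remaining work is bookkeeping, matching our normalization $L=\sum\nu_{r}/s^{4}$ and centering $m$ to Petrov's notation and verifying that the threshold $l$ is chosen so that the Gaussian tail piece is itself $O(L)$. I expect no conceptual obstacle beyond tracking the rescaling between $s$ and Petrov's $B_{n}$.
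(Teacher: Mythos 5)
Your proof is correct and is in substance the same argument that the paper points to: Fourier inversion of $\Psi\in L^{1}$, a split of the frequency integral at $l=(4Ls^{2})^{-1}$, Petrov's characteristic-function estimate $|\Psi(u)e^{-ium}-e^{-u^{2}s^{2}/2}|\leq 16Ls^{4}|u|^{3}e^{-u^{2}s^{2}/3}$ on the inner range (after the rescaling $t=us$ that converts $\sum\nu_{r}/s^{3}=sL$ to Petrov's $L_{n}$), and the triangle inequality plus a Gaussian tail bound on the outer range. This is exactly the route of Lemma~3 of \cite{rad}, which the paper cites for this step, so there is nothing to reconcile.
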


\begin{proof}
Essentially same as Lemma 3 of \cite{rad}.
\end{proof}

\begin{remark}
Our use of this result exploits the fact that, for any L\'{e}vy process, any 
$t>0,$ and any $n\geq 1,$ $X_{t}$ is the sum of $n$ independent and
identically distributed summands.
\end{remark}

\begin{proof}[Proof of (ii) in Theorem~\protect\ref{thlocalXSC0}]
We observe first that the assumption that $b<x_{t}<\mu $ and $%
x_{t}\rightarrow b$ implies that $\rho_{t}\rightarrow \infty ,$
irrespective of whether $t\rightarrow 0$ or $t\rightarrow \infty .$ We next
establish that these conditions on $x_{t}$, the fact that $tH(\rho
_{t})\rightarrow \infty ,$ and the stochastic compactness at $0,$ imply that 
$x\rho_{t}\rightarrow \infty ,$ again irrespective of whether $t\rightarrow
0$ or $t\rightarrow \infty $. Indeed, the identities 
\begin{equation}
\frac{tH(\rho_{t})}{x\rho_{t}}=\frac{t\psi (\rho_{t})-t\rho_{t}\psi
^{\prime }(\rho_{t})}{t\rho_{t}\psi ^{\prime }(\rho_{t})}=\frac{\psi
(\rho_{t})}{\rho_{t}\psi ^{\prime }(\rho_{t})}-1,  \label{eq34}
\end{equation}%
show that it is enough to justify that $0<\liminf_{z%
\rightarrow \infty }\frac{z\psi ^{\prime }(z)}{\psi (z)}.$ If the drift of $%
X $ is positive this is straightforward. If the drift is zero this holds
whenever $\limsup_{z\rightarrow 0}\frac{z\overline{\Pi }(z)}{%
\int_{0}^{z}y\Pi (dy)}<\infty ,$ which in turn holds by stochastic
compactness at zero, 
\begin{equation*}
\limsup_{z\rightarrow 0}\frac{z\overline{\Pi }(z)}{\int_{0}^{z}y\Pi (dy)}%
\leq \limsup_{z\rightarrow 0}\frac{z^{2}\overline{\Pi }(z)}{%
\int_{0}^{z}y^{2}\Pi (dy)}<\infty .
\end{equation*}%
The former claim is an easy consequence of the following inequalities 
\begin{equation*}
\begin{split}
\frac{\lambda \psi ^{\prime }(\lambda )}{\psi (\lambda )}& =\frac{%
\int_{0}^{\infty }ye^{-\lambda y}\Pi (dy)}{\int_{0}^{\infty }e^{-\lambda y}%
\overline{\Pi }(y)dy} \\
& \geq \frac{\int_{0}^{1/\lambda }ye^{-1}\Pi (dy)}{\int_{0}^{1/\lambda }%
\overline{\Pi }(y)dy+(1/\lambda )\overline{\Pi }(1/\lambda )} \\
& =\frac{e^{-1}\int_{0}^{1/\lambda }y\Pi (dy)}{\int_{0}^{1/\lambda }y{\Pi }%
(dy)+(2/\lambda )\overline{\Pi }(1/\lambda )}
\end{split}%
\end{equation*}%
which are obtained by barehand calculations. It is important to remark that
the above facts and the Lemma~\ref{uniformityH} imply that $x\rho
_{t}\rightarrow \infty $ uniformly in $x.$ Furthermore, our previous remarks
allow us to provide a unified proof of the cases $t\rightarrow 0$ or $%
t\rightarrow \infty .$

We will apply Lemma~\ref{Petrov+D} with $n=[x\rho_{t}]$ and $%
W=Y_{t}=\sum_{k=1}^{n}Z_{k}$ with $Z_{k}=Y_{\frac{tk}{n}}-Y_{\frac{t(k-1)}{n}%
}\overset{\text{Law}}{=}Y_{\frac{t}{n}},$ for $k\in \{1,\ldots ,n\}.$ We use
the estimate (\ref{lb4}) with $s=t/n,$ thus $\mu_{s}=x/n,$ which together
with our choice of $n$ lead to the approximation 
\begin{equation}\label{eq:392}
\begin{split}
n\nu & :=n\mathbb{E}\left(|Z_{1}-x/n|^{3}\right)\leq t\left\{ 6(\rho_{t})^{-3}Q_{\Pi }(1/\rho
_{t})+2\frac{x}{n}\sigma ^{2}(\rho_{t})\right\} \\
& = t\left\{ 6(\rho_{t})^{-3}Q_{\Pi }(1/\rho_{t})+2\frac{\rho
_{t}^{2}\sigma ^{2}(\rho_{t})}{\rho_{t}^{3}}\left(1+o(1)\right)\right\} ,
\end{split}%
\end{equation}%
for $\rho_{t}$ large enough; here the term 
$$1\leq 1+o(1)=\frac{x\rho_{t}}{[x\rho_{t}]}\leq \frac{1}{1-\frac{1}{x\rho_{t}}}\to 1,$$ and the convergence is uniform in $x.$ It is then immediate from the definition of $L$
that for $\rho_{t}$ large enough 
\begin{equation}\label{eq:393}
\sqrt{t}\sigma (\rho_{t})L=\frac{n\nu }{\{t\sigma ^{2}(\rho_{t})\}^{\frac{3%
}{2}}}\leq \frac{1}{\left( t\rho_{t}^{2}\sigma ^{2}(\rho_{t})\right)
^{1/2}}\left( 6\frac{Q_{\Pi }(1/\rho_{t})}{\rho_{t}^{2}\sigma ^{2}(\rho
_{t})}+2(1+o(1))\right).
\end{equation}%
Which because of the assumption of stochastic compactness at $0$ and Lemma~%
\ref{lemma2} imply that for $x\rho_{t}$ and $\rho_{t}$ large enough there is a constant $k_{1}$ such that 
\begin{equation}\label{eq:394}
\sqrt{t}\sigma (\rho_{t})L\leq \frac{k_{1}}{\sqrt{tH(\rho_{t})}}.
\end{equation}%
So the lemma tells us that (\ref{6a}) holds provided that 
\begin{equation*}
\gamma :=\sqrt{t}\sigma (\rho_{t})\int_{l}^{\infty }e^{-t\Re (\psi_{\rho
_{t}}(i\theta ))}d\theta \rightarrow 0.
\end{equation*}%
To prove that this is indeed the case, observe that the above estimate for $L$
gives 
\begin{equation*}
\ell =\frac{1}{4Ls^{2}}=\frac{t\sigma ^{2}(\rho_{t})}{4n\nu }\gtrsim k_{2}%
\frac{\left( t\rho_{t}^{2}\sigma ^{2}(\rho_{t})\right) ^{1/2}}{\left(
t\sigma ^{2}(\rho_{t})\right) ^{1/2}}=k_{2}\rho_{t},
\end{equation*}%
for $\rho_{t}$ large enough. Applying the inequality (\ref{eq:30}) we
obtain that for $\theta \geq \ell \geq k_{2}\rho_{t}$ 
\begin{equation*}
\begin{split}
\Re (\psi_{\rho_{t}}(i\theta ))& =\int_{0}^{\infty }(1-\cos (\theta
y))e^{-\rho_{t}y}\Pi (dy) \\
& \geq k_{3}e^{-1/k_{2}}\theta ^{2}\int_{0}^{1/\theta }y^{2}\Pi (dy)\geq
k_{4}H(\theta ).
\end{split}%
\end{equation*}%
It follows from the above and the estimate (\ref{eq:29}) that for any $%
0<\alpha_{0}<\alpha \leq 2,$ with $\alpha $ as in $(SC_{0}^{\prime }),$ and
for $\rho_{t}$ large enough 
\begin{equation*}
\begin{split}
\sqrt{t}\sigma (\rho_{t})\int_{l}^{\infty }e^{-t{\Re }(\psi_{\rho
}(i\theta ))}d\theta & \leq \sqrt{t}\sigma (\rho_{t})\int_{l}^{\infty }\exp
\{-k_{4}tH(\rho_{t})\frac{H(\theta )}{H(\rho_{t})}\}d\theta \\
& \leq \sqrt{t}\sigma (\rho_{t})\int_{\ell }^{\infty }\exp \left\{
-k_{5}tH(\rho_{t})\left( \frac{\theta }{\rho_{t}}\right) ^{\alpha
_{0}}\right\} d\theta \\
& \leq k_{6}\sqrt{t}\rho_{t}\sigma (\rho_{t})\int_{k_{2}}^{\infty }\exp
\left\{ -k_{4}tH(\rho_{t})\theta ^{\alpha_{0}}\right\} d\theta \\
& \leq k_{7}\frac{\sqrt{t}\rho_{t}\sigma (\rho_{t})}{\left( tH(\rho
_{t})\right) ^{1/\alpha_{0}}} \\
& \leq k_{8}\frac{1}{\left( tH(\rho_{t})\right) ^{1/\alpha_{0}-1/2}}%
\xrightarrow[]{}0,
\end{split}%
\end{equation*}%
where in the last inequality we used Lemma 7. Observe that the uniformity
follows from Lemma~\ref{uniformityH} and the fact that $\rho_{t}$ tends to
infinity uniformly as well because it is non-increasing.
\end{proof}

\subsection{Proof of Theorem~\protect\ref{thm01} in the $SC_{0}$ cases}

\begin{proof}[Proof of estimate in (\protect\ref{eq01}) in the $SC_{0}$ case]
With $s_{t}:=\sqrt{t}\sigma (\rho_{t})$ we start by observing that
\begin{equation}
\begin{split}
& \sqrt{2\pi }s_{t}e^{tH(\rho_{t})}h_{x}^{J}(t) \\
& =\sqrt{2\pi }s_{t}e^{tH(\rho_{t})}\int_{0}^{x}\overline{\Pi }%
(y)f_{t}(x-y)dy \\
& =\sqrt{2\pi }\int_{0}^{x}\overline{\Pi }(y)e^{-\rho_{t}y}\left( \phi \left(
\frac{y}{s_{t}}\right)+o(1)\right) dy \\
& \leq \frac{\psi_{*}(\rho_{t})}{\rho_{t}}(1+o(1))
\end{split}%
\end{equation}%
where we have used the identity%
\begin{equation*}
\int_{0}^{\infty }e^{-\rho_{t}y}\overline{\Pi }(y)dy=\frac{\psi_{*}(\rho_{t})}{%
\rho_{t}}.
\end{equation*}
To establish a lower bound, we use that for $\varepsilon >0,$ there exists a 
$\delta >0$ such that if $v\leq \delta s_{t}$ we have $\sqrt{2\pi }\phi
\left( \frac{v}{s_{t}}\right) \geq 1-\varepsilon .$ Put $x^{\ast }:=x\wedge
\delta s_{t}$ and write 
\begin{eqnarray*}
\sqrt{2\pi }\int_{0}^{x}\overline{\Pi }(y)e^{-\rho_{t}y}\phi \left(\frac{y}{s_{t}%
}\right)dy &\geq &(1-\varepsilon )\int_{0}^{x^{\ast }}\overline{\Pi }(y)e^{-\rho
_{t}y}dy \\
&=&(1-\varepsilon )\left( \frac{\psi_{*}(\rho_{t})}{\rho_{t}}-\int_{x^{\ast
}}^{\infty }\overline{\Pi }(y)e^{-\rho_{t}y}dy\right) \\
&=&(1-\varepsilon )\left( \frac{\psi_{*}(\rho_{t})}{\rho_{t}}\right)
(1+o(1)).
\end{eqnarray*}%
Here we use%
\begin{eqnarray*}
\frac{\int_{x^{\ast }}^{\infty }\overline{\Pi }(y)e^{-\rho_{t}y}dy}{\frac{%
\psi (\rho_{t})}{\rho_{t}}-b} &\leq &\frac{\overline{\Pi }(x^{\ast
})e^{-\rho_{t}x^{\ast }}}{\int_{0}^{\infty }(1-e^{-\rho_{t}y})\Pi (dy)} \\
&\leq &\frac{\overline{\Pi }(x^{\ast })e^{-\rho_{t}x^{\ast }}}{(1-e^{-\rho
_{t}x^{\ast }})\overline{\Pi }(x^{\ast })}=\frac{1}{e^{\rho_{t}x^{\ast }}-1}
\end{eqnarray*}%
and the fact that $x^{\ast }\rho_{t}\rightarrow \infty ,$ uniformly either
as $t\rightarrow \infty $ or $t\rightarrow 0,$ which follows from the fact
that $s_{t}\rho_{t}=O(tH(\rho_{t})),$ Lemma \ref{uniformityH}; and that $x\rho_{t}\rightarrow\infty ,$ uniformly, which was proved in (\ref{eq34}). \end{proof}

Essentially the same arguments, together with the formulas in (\ref%
{creepinginterval}) and (\ref{jumpinginterval}) allow us to prove the
estimates in (\ref{eq03}) and (\ref{eq04}).

\begin{proof}[Proof of (\protect\ref{eq04}) in the $SC_{0}$ case]
From the identity (\ref{creepinginterval}) and (\ref{6a}) we deduce the
following upper bound 
\begin{equation*}
\begin{split}
& \sqrt{2\pi }s_{t}e^{tH(\rho_{t})}h_{x}^{C}(t,\Delta ) \\
& =b\sqrt{2\pi }s_{t}e^{tH(\rho_{t})}\int_{[0,x)}dyf_{t}(y)u_{\Delta }(x-y)
\\
& =b\int_{[0,x)}dy\left[ \sqrt{2\pi }\phi \left( \frac{y}{s_{t}}\right) +o(1)%
\right] e^{-\rho_{t}y}u_{\Delta }(y) \\
& \leq b(1+o(1))\int_{[0,\infty )}dye^{-\rho_{t}y}u_{\Delta }(y) \\
& =b(1+o(1))\frac{1-e^{-\Delta \psi (\rho_{t})}}{\psi (\rho_{t})}.
\end{split}%
\end{equation*}%
To get a lower bound, we proceed as in the previous proof and lower bound
the expression in the 3rd line above by%
\begin{equation*}
\begin{split}
& b(1-\varepsilon +o(1))\int_{[0,x^{\ast })}dye^{-\rho_{t}y}u_{\Delta }(y)
\\
& =b(1-\epsilon +o(1))\left( \frac{1-e^{-\Delta \psi (\rho_{t})}}{\psi
(\rho_{t})}-\int_{[x^{\ast },\infty )}dye^{-\rho_{t}y}u_{\Delta
}(y)\right) .
\end{split}%
\end{equation*}%
The conclusion follows from the bound

\begin{eqnarray*}
\int_{\lbrack x^{\ast },\infty )}dye^{-\rho_{t}y}u_{\Delta }(y) &\leq
&e^{-\rho_{t}x^{\ast }/2}\int_{[0,\infty )}dye^{-\rho_{t}y/2}u_{\Delta }(y)
\\
&=&e^{-\rho_{t}x^{\ast }/2}\frac{1-e^{-\Delta \psi (\rho_{t}/2)}}{\psi
(\rho_{t}/2)}=o\left( \frac{1-e^{-\Delta \psi (\rho_{t})}}{\psi (\rho_{t})%
}\right) ,
\end{eqnarray*}%
where we have used the fact that $\psi (\rho_{t})\backsim b\rho_{t},$ which is in turn a well known property of the Laplace exponent $\psi$ and the fact that $\rho_{t}\to\infty.$
\end{proof}

\begin{proof}[ Proof of (\protect\ref{eq03}) in the $SC_{0}$ case]
From the identity (\ref{jumpinginterval}) and (\ref{6a}) we deduce the
following upper bound 
\begin{equation*}
\begin{split}
& \sqrt{2\pi }s_{t}e^{tH(\rho_{t})}h_{x}^{J}(t,\Delta ) \\
& =\sqrt{2\pi }s_{t}e^{tH(\rho
_{t})}\int_{[0,x)}dyf_{t}(x-y)\int_{[0,y)}U_{\Delta }(dz)\overline{\Pi }(y-z)
\\
& =\int_{[0,x)}dy\left[ \sqrt{2\pi }\phi \left( \frac{y}{s_{t}}\right) +o(1)%
\right] e^{-\rho_{t}y}\int_{[0,y)}U_{\Delta }(dz)\overline{\Pi }(y-z) \\
& \leq \left[ 1+o(1)\right] \int_{[0,\infty )}dye^{-\rho
_{t}y}\int_{[0,y)}U_{\Delta }(dz)\overline{\Pi }(y-z) \\
& =(1+o(1))\left[ \int_{0}^{\Delta }ds\mathbb{E}(e^{-\rho_{t}X_{s}})\right] \left[
\int_{(0,\infty )}dz\overline{\Pi }(z)e^{-\rho_{t}z}\right]  \\
& =(1+o(1))\left[ \frac{1-e^{-\Delta \psi (\rho_{t})}}{\psi (\rho_{t})}%
\right] \left[ \frac{\psi_{*}(\rho_{t})}{\rho_{t}}\right] .
\end{split}%
\end{equation*}%
To get a lower bound, we proceed as in the previous proof and lower bound
the expression in the 3rd line above by 
\begin{equation*}
\begin{split}
&(1-\varepsilon +o(1))\int_{[0,x^{\ast })}dye^{-\rho
_{t}y}\int_{[0,y)}U_{\Delta }(dz)\overline{\Pi }(y-z) \\
&=(1-\varepsilon +o(1))\left( 
\left[ \frac{1-e^{-\Delta \psi (\rho_{t})}}{\psi (\rho_{t})}\right] \left[ 
\frac{\psi_{*}(\rho_{t})}{\rho_{t}}\right] \right.\\
&\quad\left.-\int_{[x^{\ast },\infty )}dye^{-\rho_{t}y}\int_{[0,y)}U_{\Delta }(dz)%
\overline{\Pi }(y-z)%
\right) .
\end{split}
\end{equation*}
The conclusion follows from the bound
\begin{equation*}
\begin{split}
&\int_{\lbrack x^{\ast },\infty )}dye^{-\rho_{t}y}\int_{[0,y)}U_{\Delta }(dz)
\overline{\Pi }(y-z) \\
&\leq e^{-\rho_{t}x^{\ast }/2}\int_{[0,\infty)}dye^{-\rho_{t}y/2}\int_{[0,y)}U_{\Delta }(dz)\overline{\Pi }(y-z) \\
&=e^{-\rho_{t}x^{\ast }/2}\left[ \frac{1-e^{-\Delta \psi (\rho_{t}/2)}}{%
\psi (\rho_{t}/2)}\right] \left[ \frac{\psi_{*}(\rho_{t}/2)}{%
\rho_{t}/2}\right] ,
\end{split}
\end{equation*}%
since it is easy to see that there is some $K$ with
\begin{equation*}
\left[ \frac{1-e^{-\Delta \psi (\rho_{t}/2)}}{\psi (\rho_{t}/2)}\right] %
\left[ \frac{\psi_{*}(\rho_{t}/2)}{\rho_{t}/2}\right] \leq K%
\left[ \frac{1-e^{-\Delta \psi (\rho_{t})}}{\psi (\rho_{t})}\right] \left[ 
\frac{\psi_{*}(\rho_{t})}{\rho_{t}}\right].
\end{equation*}
Indeed, this is a consequence of the results in Lemma~2 of Chapter 2 in \cite{doneysbook} and the fact that the function $x\mapsto U_{\Delta}[0,x]$ is subadditive, which in turn follows from the identity 
$$U_{\Delta}[0,x]=\mathbb{E}(T_{x}\wedge \Delta),$$ and the strong Markov property of $X$. \end{proof}

\section{Proof of Theorem~\protect\ref{thm01}}

We have already proved Theorem~\ref{thm01} in the $(SC_{0})$ cases, so we
will hereafter omit that case. For the $(SC_{\infty })$ and $(G)$ cases we
will use the following result instead of Lemma \ref{Petrov+D}.

\begin{lemma}
\label{lemma:06} Let $\Lambda $ be defined by 
\begin{equation*}
\Lambda :=s_{t}L=\frac{[x\rho_{t}]\mathbb{E}\left(|Y_{1}-\mu_{1}|^{3}\right)%
}{(t\sigma ^{2}(\rho_{t}))^{3/2}}.
\end{equation*}%
We have the inequality 
\begin{equation*}
|\mathbb{E}(e^{-iz(Y_{t}-\mu_{t})/s_{t}})-e^{-z^{2}/2}|\leq 16\Lambda
|z|^{3}e^{-z^{2}/3}\text{ for all }|z|\leq 1/4\Lambda ,
\end{equation*}%
and under the assumptions of Theorem~\ref{thm01} there exists a constant $%
K\in (0,\infty ),$ such that for large $t,$ 
\begin{equation}
\Lambda \leq \frac{K}{\sqrt{tH(\rho_{t})}}\rightarrow 0.  \label{Lambda}
\end{equation}%
uniformly in $x.$
\end{lemma}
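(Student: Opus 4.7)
The plan is to combine a classical Petrov-style bound on the characteristic function of a standardized sum with the third-moment computation already carried out in the $(SC_0)$ proof.

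For the first inequality, I decompose $Y_t=\sum_{k=1}^{n}Z_k$ with $n=[x\rho_t]$ and $Z_k\stackrel{d}{=}Y_{t/n}$, so that
\begin{equation*}
\mathbb{E}\bigl(e^{-iz(Y_t-\mu_t)/s_t}\bigr)=\varphi(-z/s_t)^n,
\end{equation*}
where $\varphi$ is the characteristic function of the centred summand $Z_1-x/n$, whose variance equals $s_t^2/n$ and whose third absolute moment is $\nu$; thus $\Lambda=n\nu/s_t^3$ is the Lyapunov ratio of this sum. The desired estimate is then the standard non-asymptotic bound underlying the Berry--Esseen theorem: for any iid decomposition into $n$ centred summands with Lyapunov ratio $\Lambda$, one has
\begin{equation*}
\bigl|\varphi(-z/s_t)^n-e^{-z^2/2}\bigr|\leq 16\Lambda |z|^3 e^{-z^2/3},\qquad |z|\leq 1/(4\Lambda),
\end{equation*}
which is Lemma V.2.1 of \cite{petrov}. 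Its proof is standard: Taylor-expand $\log\varphi(u)$ to third order using $|\varphi(u)-(1-u^2s_t^2/(2n))|\leq \nu |u|^3/6$, compare the resulting product to $e^{-z^2/2}$, and use $|z|\leq 1/(4\Lambda)$ to absorb the remainder into $e^{-z^2/3}$.

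For the uniform bound on $\Lambda$, I would essentially reproduce the computation in (\ref{eq:392})--(\ref{eq:394}). The third-moment inequality proved just before Lemma~\ref{uniformityH}, applied with $s=t/n$, together with the fact that $x/n=\rho_t^{-1}(1+o(1))$ uniformly in $x$ (since $x\rho_t\to\infty$ uniformly: in $(G)$ this is immediate as $\rho_t$ and $x/t$ are both bounded away from $0$; in $(SC_\infty)$ it follows from $t\rho_t\to\infty$, which one reads off from (\ref{eq34}) combined with $tH(\rho_t)\to\infty$ from Lemma~\ref{uniformityH} and the fact that $\rho_t\psi'(\rho_t)/\psi(\rho_t)\to 1$ as $\rho_t\to 0$), yields
\begin{equation*}
n\nu\leq 6t\rho_t^{-3}Q_\Pi(1/\rho_t)+2t\rho_t^{-1}\sigma^2(\rho_t)(1+o(1)).
\end{equation*}
Dividing by $s_t^3=(t\sigma^2(\rho_t))^{3/2}$ and factoring out $(t\rho_t^2\sigma^2(\rho_t))^{-1/2}$ gives
\begin{equation*}
\Lambda\leq\frac{1}{\sqrt{t\rho_t^2\sigma^2(\rho_t)}}\left(\frac{6Q_\Pi(1/\rho_t)}{\rho_t^2\sigma^2(\rho_t)}+2(1+o(1))\right).
\end{equation*}
By Lemma~\ref{lemma2}(a), $Q_\Pi(1/\rho_t)\leq 2eH(\rho_t)$, and by Lemma~\ref{lemma2}(c) the ratio $H(\rho_t)/(\rho_t^2\sigma^2(\rho_t))$ is bounded above by a finite constant, uniformly in $x$, in all three regimes. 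Hence $\Lambda\leq K/\sqrt{tH(\rho_t)}$, and the right-hand side tends to $0$ uniformly in $x$ by Lemma~\ref{uniformityH}.

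The main delicate point is verifying that the constant coming from Lemma~\ref{lemma2}(c) is indeed uniform across regimes. In $(SC_\infty)$ one has $\rho_t\to 0$, equivalently $1/\rho_t\to\infty$, and the stochastic compactness at infinity caps $\overline{\Pi}(1/\rho_t)/K_\Pi(1/\rho_t)$ for large $t$; in $(G)$, $\rho_t$ takes values in a compact subinterval of $(0,\infty)$ so the bound is automatic by continuity. Once this is in place, the remaining steps are mechanical, paralleling the $(SC_0)$ calculation verbatim.
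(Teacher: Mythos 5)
Your proposal is correct and follows essentially the same route as the paper: the same i.i.d. decomposition $Y_t=\sum_{k=1}^{[x\rho_t]}Z_k$ feeding into the Ess\'een-type bound from Petrov (Lemma 1, p.~109), and the same third-moment estimate via (\ref{eq:392})--(\ref{eq:393}) combined with Lemma~\ref{lemma2} and Lemma~\ref{uniformityH}. The only difference is that you spell out in slightly more detail why $x\rho_t\to\infty$ uniformly in the $(SC_\infty)$ and $(G)$ regimes, a point the paper merely refers back to the proof of Theorem~\ref{thlocalXSC0}(ii).
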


\begin{proof}[Proof]
The claimed inequality is a consequence of the Ess\'een-like inequality in Lemma 1,
page 109 in \cite{petrov},  applied to the sum of i.i.d. random variables $$\sum^{n}_{k=1}\left(Y_{\frac{tk}{n}}-Y_{\frac{t(k-1)}{n}}\right)=Y_{t},$$ with $n=[x\rho_{t}].$ The common mean and variance are $\mu_{t/n}=\frac{t}{n}x_{t}$ and $s^{2}_{t/n}=\frac{t}{n}\sigma^{2}(\rho_{t}),$ respectively. As in the proof of (ii) in Theorem~\ref{thlocalXSC0} it is proved that $x\rho_{t}\to\infty$ uniformly. Using this fact it is easy to verify that the arguments used to obtain (\ref{eq:392}) and (\ref{eq:393}) can be extended to show that
\begin{equation}\label{eq:393bis}
\sqrt{t}\sigma (\rho_{t})L=\frac{[x\rho_{t}]\mathbb{E}\left(|Y_{1}-\mu_{1}|^{3}\right)}{(t\sigma
^{2}(\rho_{t}))^{3/2}}\lesssim \frac{1}{\left( t\rho_{t}^{2}\sigma ^{2}(\rho_{t})\right)
^{1/2}}\left( 6\frac{Q_{\Pi }(1/\rho_{t})}{\rho_{t}^{2}\sigma ^{2}(\rho
_{t})}+2\right).
\end{equation}
In the $(SC_{\infty})$ case, the above inequality together with the Lemma~\ref{lemma2} gives the result. In the $(G)$ case, the result also follows immediately from the latter inequality but one needs to recall that because $b<\liminf_{t\to\infty} x_{t}\leq \limsup_{t\to\infty} x_{t}<\mu,$ then
 $$0<\liminf_{t\to\infty}\rho_{t}\leq \limsup_{t\to\infty}\rho_{t}<\infty,$$
\begin{equation*}
0<\liminf_{t\rightarrow \infty }H(\rho_{t})\leq \limsup_{t\rightarrow
\infty }H(\rho_{t})<\infty ,
\end{equation*}%
and 
\begin{equation*}
0<\liminf_{t\rightarrow \infty }\sigma^{2}(\rho_{t})\leq \limsup_{t\rightarrow
\infty }\sigma^{2}(\rho_{t})<\infty.
\end{equation*}
\end{proof}

\begin{proof}[Proof of estimate in (\protect\ref{eq01})]
We carry out this proof in several steps. 
~\newline
\noindent\textbf{Step 1: An useful representation for $h_{x}^{J}$.} Let $%
\lambda\geq 0,$ fixed. We know that%
\begin{eqnarray}
h_{z}^{J}(t) &=&\int_{0}^{z}\mathbb{P}(X_{t}\in dy)\overline{\Pi }(z-y)  \label{a} \\
&=&e^{\lambda z}\int_{0}^{z}e^{-\lambda y}\mathbb{P}(X_{t}\in dy)e^{-\lambda (z-y)}%
\overline{\Pi }(z-y),
\end{eqnarray}
and we note that for $\beta\in\mathbb{C},$ such that $\Re\beta\geq 0,$ 
\begin{equation*}
\int_{0}^{\infty }e^{-\beta y}\overline{\Pi }(y)dy=\frac{\psi_{*}(\beta)}{\beta};
\end{equation*}
when $\beta=0$ the above inequality is understood in the limiting sense.
It follows that%
\begin{equation*}
\int_{0}^{\infty }e^{iyz}e^{-\lambda y}h_{y}^{J}(t)dy=\left(\frac{\psi_{*}(\lambda -iz)}{\lambda -iz}\right)e^{-t\psi (\lambda -iz)}.
\end{equation*}%
For each $t>0$ and $\lambda>0$ define a probability density function by the relation 
\begin{equation*}
g^{\lambda}_{t}(y)=\frac{\lambda}{\psi_{*}(\lambda)}e^{-\lambda
y+t\psi(\lambda)}h_{y}^{J}(t),\qquad y\geq 0.
\end{equation*}
This probability density equals that of the convolution of $%
\mathbb{P}(Y^{\lambda}_{t}\in dy):=e^{-\lambda y+t\psi(\lambda)}\mathbb{P}(X_{t}\in dy)$ and $%
\mathbb{P}(Z_{\lambda}\in dy)=\frac{\lambda}{\psi(\lambda)}e^{-\lambda y}\overline{\Pi%
}(y)dy.$  It follows from the above calculations that for any $z\in \mathbb{R}$ 
\begin{equation*}
\begin{split}
\widehat{g}^{\lambda}_{t}(z)&:=\int_{0}^{\infty
}dye^{iyz}g^{\lambda}_{t}(y)\\
&=\exp\{-t\left(\psi (\lambda -iz)-\psi
(\lambda)\right)\}\frac{\psi_{*}(\lambda-iz)}{\lambda-iz}\frac{\lambda}{%
\psi_{*}(\lambda)}.
\end{split}
\end{equation*}
As a consequence of the hypothesis (H) it is proved in the Lemma~\ref{lemma02} that we always have $\widehat{g}^{\lambda}\in L_{1}$. Then by the inversion theorem for Fourier transforms we get the key expression 
\begin{equation}
g_{t}^{\lambda }(y)=\frac{1}{2\pi }\int_{-\infty }^{\infty }e^{-izy}\widehat{%
g}_{t}^{\lambda }(z)dz,\qquad y\in \mathbb{R}.  \label{keyexp}
\end{equation}%
Now take $y=x,$ and $\lambda =\rho_{t}=\rho (x/t),$ and denote $\mu
_{t}=\mathbb{E}(Y_{t})=x.$ Recalling that $\rho_{t}x=t\rho_{t}\psi ^{\prime }(\rho
_{t}),$ we rewrite the above formula as 
\begin{equation}
\begin{split}
\frac{\rho_{t}}{\psi (\rho_{t})}e^{tH(\rho_{t})}h_{x}^{J}(t)&
=g_{t}^{\rho_{t}}(x)=\frac{1}{2\pi }\int_{-\infty }^{\infty }e^{-izx}%
\widehat{g}_{t}^{\rho_{t}}(z)dz \\
& =\frac{1}{2\pi }\int_{-\infty }^{\infty }\mathbb{E}(\exp \left\{ iz\left( \left(
Y_{t}-\mu_{t}\right) +Z_{\rho_{t}}\right) \right\} )dz.
\end{split}%
\end{equation}%
~\newline
\noindent \textbf{Step 2: Taking $s_{t}:=\sqrt{t}\sigma (\rho_{t})$ we
prove the uniform convergence 
\begin{equation}
\frac{s_{t}}{2\pi }\int_{-\infty }^{\infty }\mathbb{E}(\exp \left\{ iz\left( \left(
Y_{t}-\mu_{t}\right) +Z_{\rho_{t}}\right) \right\} )dz\xrightarrow[]{}%
\frac{1}{\sqrt{2\pi }}  \label{b}
\end{equation}%
} For that end, we first notice that by a change of variables (\ref{b})
becomes 
\begin{equation}\label{53}
\frac{1}{2\pi }\int_{-\infty }^{\infty }\mathbb{E}\left( \exp \left\{ iz\left( \frac{%
Y_{t}-\mu_{t}}{s_{t}}\right) \right\} \right) \mathbb{E}\left( \exp \left\{ iz\frac{%
Z_{\rho_{t}}}{s_{t}}\right\} \right) dz\xrightarrow[]{}\frac{1}{\sqrt{2\pi }%
}
\end{equation}%
~\newline
\noindent \textbf{Step 2.1: We show that $\mathbb{E}\left(Z_{\rho
_{t}}/s_{t}\right)\rightarrow 0$ uniformly.} We write 
\begin{equation*}
\begin{split}
\mathbb{E}(Z_{\rho_{t}})& =\frac{\rho_{t}}{\psi_{*}(\rho_{t})}
\int_{0}^{\infty }ye^{-\rho_{t}y}\overline{\Pi }(y)dy=\frac{\rho_{t}}{\psi_{*}
(\rho_{t})}\int_{(0,\infty )}\Pi (dz)\int_{0}^{z}ye^{-\rho_{t}y}dy \\
& =\frac{1}{\rho_{t}\left(\psi_{*}(\rho_{t})\right)}\left( \psi_{*}(\rho_{t})-\rho_{t}\psi
^{\prime }_{*}(\rho_{t})\right) \leq \frac{1}{\rho_{t}},
\end{split}%
\end{equation*}%
and the result follows since we know $\rho_{t}s_{t}\rightarrow \infty $
uniformly.

\noindent \textbf{Step 2.2.} We now split the integral in (\ref{53}) in four terms 
\begin{equation}
\begin{split}
& \int_{-\infty }^{\infty }\mathbb{E}\left( \exp \left\{ iz\left( \frac{Y_{t}-\mu_{t}%
}{s_{t}}\right) \right\} \right) \mathbb{E}\left( \exp \left\{ iz\frac{Z_{\rho_{t}}}{%
s_{t}}\right\} \right) dz \\
& =\int_{|z|>1/4\Lambda }\mathbb{E}\left( \exp \left\{ iz\left( \frac{Y_{t}-\mu_{t}}{%
s_{t}}\right) \right\} \right) \mathbb{E}\left( \exp \left\{ iz\frac{Z_{\rho_{t}}}{%
s_{t}}\right\} \right) dz+\int_{|z|\leq 1/4\Lambda }e^{-z^{2}/2}dz \\
& +\int_{|z|\leq 1/4\Lambda }\left[ \mathbb{E}\left( \exp \left\{ iz\left( \frac{%
Y_{t}-\mu_{t}}{s_{t}}\right) \right\} \right) -e^{-z^{2}/2}\right] \mathbb{E}\left(
\exp \left\{ iz\frac{Z_{\rho_{t}}}{s_{t}}\right\} \right) dz \\
& -\int_{|z|\leq 1/4\Lambda }e^{-z^{2}/2}\left[ 1-\mathbb{E}\left( \exp \left\{ iz%
\frac{Z_{\rho_{t}}}{s_{t}}\right\} \right) \right] dz \\
& =I+II+III+IV
\end{split}
\label{splitI-IV}
\end{equation}
By Lemma~\ref{lemma:06} we know that $\Lambda \rightarrow 0$ uniformly in $%
x. $ It follows that $II\rightarrow \sqrt{2\pi }.$ Also by the
inequality in Lemma~\ref{lemma:06} it is straightforward that $%
III\rightarrow 0.$ We can bound $IV$ in modulus by%
\begin{equation*}
\frac{\mathbb{E}\left(Z_{\rho_{t}}\right)}{s_{t}}\int_{|z|\leq 1/4\Lambda }|z|e^{-z^{2}/2}dz,
\end{equation*}%
so by the previous result it remains only to verify that 
\begin{equation}
\left\vert \int_{|z|>1/4\Lambda }\mathbb{E}\left( \exp \left\{ iz\left( \frac{%
Y_{t}-\mu_{t}}{s_{t}}\right) \right\} \right) \mathbb{E}\left( \exp \left\{ iz\frac{%
Z_{\rho_{t}}}{s_{t}}\right\} \right) dz\right\vert \rightarrow 0,  \label{e}
\end{equation}%
~\newline
\noindent \textbf{Step 2.3: In the setting ($SC_{\infty }$) or $(G),$ and (H) is
satisfied, the estimate (\ref{e}) holds.} From the Lemma~\ref{lemma:06} and
Lemma~\ref{lemma2} we know that there is a constant $k_{1}$ such that $%
s_{t}\Lambda \leq k_{1}/\rho_{t}.$ Using this fact and making elementary
manipulations we deduce the following upper bound 
\begin{equation}
\begin{split}
& \left\vert \int_{|z|>1/4\Lambda }\mathbb{E}\left( \exp \left\{ iz\left( \frac{%
Y_{t}-\mu_{t}}{s_{t}}\right) \right\} \right) \mathbb{E}\left( \exp \left\{ iz\frac{%
Z_{\rho_{t}}}{s_{t}}\right\} \right) dz\right\vert \\
& \leq s_{t}\int_{|z|>1/4s_{t}\Lambda }\exp \left\{ -t\Re \left\{ \psi
_{\rho_{t}}\left( -iz\right) \right\} \right\} \left\vert \frac{\psi_{*}(\rho
_{t}-iz)}{\rho_{t}-iz}\frac{\rho_{t}}{\psi_{*}(\rho_{t})}\right\vert dz \\
& \leq s_{t}\int_{k_{2}\rho_{t}<|z|<1}\exp \left\{ -t\Re \left\{ \psi
_{\rho_{t}}\left( -iz\right) \right\} \right\} \left\vert \frac{\psi_{*}(\rho
_{t}-iz)}{\rho_{t}-iz}\frac{\rho_{t}}{\psi_{*}(\rho_{t})}\right\vert dz \\
& \quad +s_{t}\int_{|z|>1}\exp \left\{ -t\Re \left\{ \psi_{\rho_{t}}\left(
-iz\right) \right\} \right\} \left\vert \frac{\psi_{*}(\rho
_{t}-iz)}{\rho_{t}-iz}\frac{\rho_{t}}{\psi_{*}(\rho_{t})}\right\vert dz \\
& =:A+B
\end{split}
\label{eq:43}
\end{equation}%
~\newline
\noindent \textbf{Step 2.3.1: $(SC_{\infty})$ case.} In order to prove that $A\rightarrow 0$
uniformly, we start by observing that the hypothesis of stochastic
compactness at infinity $(SC_{\infty }),$ and Proposition 2.2.1 in \cite{BGT}
imply that for any $\alpha_{0}\in (2-\alpha ,2)$ there are constants $k_{4}$
and ${k_{5}}$ such that 
\begin{equation*}
\frac{\int_{0}^{u}z\overline{\Pi }(z)dz}{\int_{0}^{v}z\overline{\Pi }(z)dz}%
\leq k_{4}\left( \frac{u}{v}\right) ^{\alpha_{0}},\qquad u\geq v\geq {k_{5}}
\end{equation*}%
and thus 
\begin{equation}
\frac{Q_{\Pi }(u)}{Q_{\Pi }(v)}\leq k_{4}\left( \frac{u}{v}\right) ^{\alpha
_{0}-2},\qquad u\geq v\geq {k_{5}}.  \label{42a}
\end{equation}%
We fix $\alpha_{0}\in (2-\alpha ,2),$ take $\overline{\rho }%
>\sup_{\{t>1\}}\rho_{t},$ and choose $v_{0}>1$ such that ${k_{5}}\overline{%
\rho }\vee \left( \frac{1}{k_{3}}\right) <v_{0}.$ Next, we bound $A$ 
above as follows 
\begin{equation}
\begin{split}
A& =s_{t}\int_{k_{2}\rho_{t}<|z|<1}\exp \left\{ -t\Re \left\{ \psi_{\rho
_{t}}\left( -iz\right) \right\} \right\} \left\vert \frac{\psi_{*}(\rho_{t}-iz)
}{\rho_{t}-iz}\frac{\rho_{t}}{\psi_{*}(\rho_{t})}\right\vert dz \\
& \leq s_{t}\rho_{t}\int_{k_{2}<|\theta |<1/\rho_{t}}\exp \left\{ -t\Re
\left\{ \psi_{\rho_{t}}\left( -i\theta \rho_{t}\right) \right\} \right\}
d\theta =:A_{1}.
\end{split}%
\end{equation}%
To describe the behaviour of $A_{1}$ we start by lower bounding the exponent
of the integrand as follows. For $\theta \in ((v_{0})^{-1},(k_{5}\rho
_{t})^{-1}),$ or equivalently $k_{5}<(\theta \rho_{t})^{-1}<v_{0}/\rho_{t}$
\begin{equation*}
\begin{split}
\Re \psi_{\rho_{t}}(-i\theta \rho_{t})& =\int_{0}^{\infty }(1-\cos (\theta
\rho_{t}y))e^{-\rho_{t}y}\Pi (dy) \\
& \geq k_{6}e^{-1/\theta }K_{\Pi }(1/(\theta \rho_{t})) \\
& \geq k_{6}e^{-v_{0}}\inf_{u\geq 1/(v_{0}\rho_{t})}\left\{ \frac{K_{\Pi
}(u)}{Q_{\Pi }(u)}\right\} \frac{Q_{\Pi }(1/(\theta \rho_{t}))}{Q_{\Pi
}(v_{0}/\rho_{t})}Q_{\Pi }(v_{0}/\rho_{t}) \\
& \geq k_{7}\inf \left\{ \frac{K_{\Pi }(u)}{Q_{\Pi }(u)},{u\geq \left( v_{0}%
\overline{\rho }\right) ^{-1}}\right\} (\theta v_{0})^{2-\alpha_{0}}Q_{\Pi
}(v_{0}/\rho_{t}),
\end{split}%
\end{equation*}%
where in the last inequality we used (\ref{42a}). The later together with
the inequality 
\begin{equation*}
v_{0}^{2}Q_{\Pi }(v_{0}/\rho_{t})=\rho_{t}^{2}\int_{0}^{v_{0}/\rho_{t}}y%
\overline{\Pi }(y)dy\geq \rho_{t}^{2}\int_{0}^{1/\rho_{t}}y\overline{\Pi }%
(y)dy=Q_{\Pi }(1/\rho_{t})\geq H(\rho_{t}),
\end{equation*}%
imply 
\begin{equation*}
t\Re \psi_{\rho_{t}}(-i\theta \rho_{t})\geq k_{8}\theta ^{2-\alpha
_{0}}(v_{0})^{-\alpha_{0}}tH(\rho_{t}),
\end{equation*}%
for $t$ large enough, uniformly in $x.$ Applying this in $A_{1}$ and the
results from Lemma~\ref{lemma2} we obtain 
\begin{equation*}
\begin{split}
A_{1}& \leq \sqrt{t}\sigma (\rho_{t})\rho_{t}\int_{1/v_{0}}^{1/\rho_{t}{%
k_{5}}}\exp \{-k_{8}\theta ^{2-\alpha_{0}}(v_{0})^{-\alpha_{0}}tH(\rho
_{t})\}d\theta \\
& \leq \sqrt{2tH(\rho_{t})}\int_{1/v_{0}}^{1/\rho_{t}k_{5}}\exp
\{-k_{9}\theta ^{2-\alpha_{0}}tH(\rho_{t})\}d\theta ,
\end{split}%
\end{equation*}%
where $k_{9}=k_{8}v_{0}^{-\alpha }.$ Recall that $tH(\rho_{t})\rightarrow
\infty ,$ so that putting $\theta ^{2-\alpha_{0}}tH(\rho_{t})=z^{2-\alpha
_{0}}$ gives 
\begin{equation*}
A_{1}\leq (\sqrt{2}tH(\rho_{t}))^{\frac{-\alpha_{0}}{2(2-\alpha_{0})}%
}\int_{(tH(\rho_{t})/v_{0})^{\frac{1}{2-\alpha_{0}}}}^{\infty }\exp
\{-k_{9}z^{2-\alpha_{0}}\}dz\rightarrow 0.
\end{equation*}%
~\newline
\noindent \textbf{Step 2.3.2: $(SC_{\infty})$ case.} We next prove that $B\rightarrow 0.$
Proceeding as above we easily get that for $\theta >1/\rho_{t}{k_{5}}$ 
\begin{equation*}
\Re \psi_{\rho_{t}}(-i\theta \rho_{t})\geq k_{10}e^{-v_{0}}Q_{\Pi
}(1/\theta \rho_{t})\geq k_{10}e^{-v_{0}}Q_{\Pi }(k_{5}):=k_{11}.
\end{equation*}%
Now, we apply this estimate to $B$ to get that for $t>t_{0}$ 
\begin{equation*}
B\leq s_{t}e^{-(t-\widetilde{t}_{0})k_{11}}\frac{\rho_{t}}{\psi_{*}(\rho_{t})}%
\int_{|z|>1}\exp \left\{ -\widetilde{t}_{0}\Re \left\{ \psi_{\rho
_{t}}\left( -iz\right) \right\} \right\} \left\vert \frac{\psi_{*} (\rho_{t}-iz)%
}{\rho_{t}-iz}\right\vert dz.
\end{equation*}%
Observe that by Lemma~\ref{lemma02} the latter integral, as a function of $\rho_{t},$ is uniformly bounded.  This will be enough to conclude the argument because we already know that $%
\frac{\rho_{t}}{\psi_{*}(\rho_{t})}\rightarrow 1/(\mathbb{E}(X_{1})-b)<\infty $ and it is
easy to verify that $s_{t}$ grows at most as a power function of $t$. The latter
is actually true because Lemma~\ref{lemma2} allows us to ensure that 
\begin{equation*}
s_{t}\leq \frac{1}{\rho_{t}}\sqrt{tH(\rho_{t})}\leq \frac{1}{\rho_{t}}%
\sqrt{tH(\overline{\rho })},
\end{equation*}%
 with $\overline{\rho }=\sup_{t}\rho_{t}<\infty ;$ and moreover given that 
$tH(\rho_{t})\rightarrow \infty $ we deduce from (\ref{42a}) that for all
large enough $t$ 
\begin{equation*}
t\rho_{t}^{2-\alpha_{0}}\geq k_{13}tQ_{\Pi }(1/\rho_{t})\geq
k_{13}tH(\rho_{t})\geq k_{13}>0,
\end{equation*}%
that is 
\begin{equation*}
\rho_{t}\geq k_{14}t^{-1/(2-\alpha_{0})},\qquad \text{for }t\ \text{large
enough}.
\end{equation*}%
Which implies the claimed fact. ~\newline
\noindent \textbf{Step 2.3.3: case $(G).$} 
We will prove that the term
\begin{equation*}
s_{t}\int_{|z|>1/4s_{t}\Lambda }\exp \left\{ -t\Re \left\{ \psi
_{\rho_{t}}\left( -iz\right) \right\} \right\} \left\vert \frac{\psi_{*}(\rho
_{t}-iz)}{\rho_{t}-iz}\frac{\rho_{t}}{\psi_{*}(\rho_{t})}\right\vert dz\,
\end{equation*}
tends to $0$ uniformly in $x$. Recall that in this setting we have $$0<\underline{\rho}:=\liminf_{t\to\infty}\rho_{t}\leq \limsup_{t\to\infty}\rho_{t}:=\overline{\rho}<\infty.$$ Using this, the definition of $s_{t}\Lambda,$ and the calculations used in the proof of (\ref{lb4}) it is easy to check that $s_{t}\Lambda$ is bounded by below by a strictly
positive constant, say $l^{\ast }.$  Also, as we required $X$ to be 
strongly non-lattice, and this is a property that is preserved under change of
measure, we have that 
\begin{equation}
\begin{split}
\liminf_{\theta \rightarrow \infty }{\Re }(\psi_{\rho_{t}}(-i\theta ))&
=\liminf_{\theta \rightarrow \infty }\int_{0}^{\infty }(1-\cos (\theta
y))e^{-\rho_{t}y}\Pi (dy) \\
& \geq \liminf_{\theta \rightarrow \infty }\int_{0}^{\infty }(1-\cos (\theta
y))e^{-\overline{\rho }y}\Pi (dy)>0.
\end{split}%
\end{equation}%
We denote $\widetilde{\psi }_{\overline{\rho }}(\theta )=\int_{0}^{\infty
}(1-\cos (\theta y))e^{-\overline{\rho }y}\Pi (dy),$ and $m(s)=\inf_{\theta
\geq s}\widetilde{\psi }_{\overline{\rho }}(\theta ).$ The above
observations and the continuity of $\widetilde{\psi }_{\overline{\rho }%
}(\theta )$ imply that $m(s)>0,$ for all $s>0.$ It follows that for $t>t_{0}$
\begin{equation*}
\begin{split}
& s_{t}\int_{l}^{\infty }e^{-t{\Re }(\psi_{\rho
_{t}}(-i\theta ))}\left\vert \frac{\psi_{*}(\rho
_{t}-i\theta)}{\rho_{t}-i\theta}\frac{\rho_{t}}{\psi_{*}(\rho_{t})}\right\vert d\theta \\
& \leq \sqrt{t}\sigma (\rho_{t})e^{-(t-t_{0})m(l^{\ast })}\int_{l^{\ast
}}^{\infty }e^{-t_{0}\widetilde{\psi }_{\overline{\rho }}(\theta )}\left\vert \frac{\psi_{*}(\rho
_{t}-i\theta)}{\rho_{t}-i\theta}\frac{\rho_{t}}{\psi_{*}(\rho_{t})}\right\vert 
d\theta.
\end{split}%
\end{equation*}%
By Lemma~\ref{lemma02} the right most term tends to $0$ uniformly in $x$. 
\end{proof}

The proofs of the estimates (\ref{eq03}) and (\ref{eq04}) use arguments very
similar to those used in the previous proof, and hence in the forthcoming
lines we will only outline the keys facts needed to adapt that proof. 
\begin{proof}[Proof of the estimate (\protect\ref{eq03})]
We proceed as before, using Lemma~\ref{CJF} we define a probability density 
\begin{equation}
\begin{split}
& \mathcal{Q}_{t}^{\lambda }(y)=\frac{\lambda \psi (\lambda )e^{t\psi
(\lambda )}}{\psi_{*}(\lambda)(1-e^{-\Delta \psi (\lambda )})}%
e^{-\lambda y}h_{y}^{J}(t,\Delta ) \\
& =\frac{\lambda \psi (\lambda )e^{t\psi (\lambda )}}{\psi_{*}(\lambda)(1-e^{-\Delta \psi (\lambda )})}e^{-\lambda y}\mathbb{P}(T_{y}\in
(t,t+\Delta ],X_{T_{y}}>y) \\
& =\int_{0}^{y}\mathbb{P}(X_{t}\in da)e^{-\lambda a}e^{t\psi (\lambda
)}\frac{\psi (\lambda )}{(1-e^{-\Delta \psi (\lambda )})}%
\int_{0}^{y-a}U_{\Delta}(dz)e^{-\lambda z} \\
& \qquad \times \frac{\lambda }{\psi_{*}(\lambda)}e^{-\lambda
(y-a-z)}\overline{\Pi }(y-a-z).
\end{split}%
\end{equation}%
We easily verify from the above expression that this is the density of the
sum of the three independent random variables, $Y_{t}^{\lambda },$ $%
Z_{\lambda },$ and $W_{\lambda },$ with $Y_{t}^{\lambda },$ and $Z_{\lambda
},$ as defined in the proof of estimate~(\ref{eq01}), and $W_{\lambda }$
that follows the probability law 
\begin{equation*}
\mathbb{P}(W_{\lambda }\in dy)=\frac{\psi (\lambda )}{%
(1-e^{-\Delta \psi (\lambda )})}U_{\Delta}(dy)e^{-\lambda y}.
\end{equation*}%
We can therefore proceed as in the proof of estimate (\ref{eq01}) replacing $%
Z_{\rho_{t}}$ by $Z_{\rho_{t}}+W_{\rho_{t}}.$ But for that end we should
first prove that the Fourier transform of $\mathcal{Q}_{t}^{\lambda }$ is
integrable. This is a straightforward consequence of the fact that 
\begin{equation*}
\left\vert \mathbb{E}(\exp \{i\beta (Y_{t}^{\lambda }+Z_{\lambda }+W_{\lambda
})\})\right\vert \leq \left\vert \mathbb{E}(\exp \{i\beta (Y_{t}^{\lambda
}+Z_{\lambda })\})\right\vert
\end{equation*}%
and that we already proved that the rightmost term in the above inequality
is integrable. We should now prove that $\mathbb{E}\left(W_{\rho_{t}}/s_{t}\right)$ tend to zero
, uniformly in $x$ and in $\Delta.$ We have 
\begin{equation}
\left\vert \mathbb{E}\left( \exp \left\{ i\beta \frac{W_{\rho_{t}}}{s_{t}}\right\}
\right) -1\right\vert \leq \mathbb{E}\left(\frac{|\beta |W_{\rho_{t}}}{s_{t}}%
\right) ,
\end{equation}%
and 
\begin{equation}
\begin{split}
\mathbb{E}\left(\frac{W_{\rho_{t}}}{s_{t}}\right) & =\frac{1}{s_{t}}\frac{\psi
(\rho_{t})}{1-e^{-\Delta \psi (\rho_{t})}}\int_{0}^{\Delta }ds\mathbb{E}\left(
X_{s}e^{-\rho_{t}X_{s}}\right) \\
& =\frac{1}{s_{t}}\frac{\psi (\rho_{t})}{1-e^{-\Delta \psi (\rho_{t})}}%
\int_{0}^{\Delta }dse^{-s\psi (\rho_{t})}\mathbb{E}\left( X_{s}e^{-\rho
_{t}X_{s}+s\psi (\rho_{t})}\right) \\
& =\frac{1}{s_{t}}\frac{\psi (\rho_{t})}{1-e^{-\Delta \psi (\rho_{t})}}%
\int_{0}^{\Delta }dse^{-s\psi (\rho_{t})}\mathbb{E}\left( Y_{s}\right) \\
& =\frac{x_{t}}{s_{t}}\frac{\psi (\rho_{t})}{1-e^{-\Delta \psi (\rho_{t})}}%
\int_{0}^{\Delta }dse^{-s\psi (\rho_{t})}s \\
& =\frac{x_{t}}{s_{t}}\frac{\psi (\rho_{t})}{1-e^{-\Delta \psi (\rho_{t})}}%
\frac{1}{\left( \psi (\rho_{t})\right) ^{2}}\left( 1-e^{-\Delta \psi (\rho
_{t})}-\Delta \psi (\rho_{t})e^{-\Delta \psi (\rho_{t})}\right) \\
& \leq \frac{\rho_{t}x_{t}}{\psi (\rho_{t})}\frac{1}{\sqrt{t\rho
_{t}^{2}\sigma ^{2}(\rho_{t})}}.
\end{split}%
\end{equation}%
The rightmost term in the above equation converges to zero uniformly in $x$
and $\Delta $ because 
\begin{equation*}
\frac{\rho_{t}x_{t}}{\psi (\rho_{t})}=\frac{\rho_{t}\psi ^{\prime }(\rho
_{t})}{\psi (\rho_{t})}\leq 1,
\end{equation*}%
which is in turn an easy consequence of the elementary inequality 
\begin{equation}
\begin{split}
\psi ^{\prime }(\lambda )& =b+\int_{0}^{\infty }ye^{-\lambda y}\Pi
(dy)=b+\int_{0}^{\infty }da\int_{a}^{\infty }e^{-\lambda y}\Pi (dy) \\
& \leq b+\int_{0}^{\infty }dae^{-\lambda a}\overline{\Pi }(a)= \frac{\psi
(\lambda )}{\lambda },
\end{split}%
\end{equation}%
for all $\lambda >0.$
\end{proof}

\begin{proof}[Proof of the estimate (\protect\ref{eq04})]
By Lemma \ref{CJF} we have the key identity 
\begin{equation*}
h_{y}^{C}(t,\Delta )=\mathbb{P}(T_{y}\in (t,t+\Delta
],X_{T_{y}}=y)=b\int_{[0,y]}\mathbb{P}(X_{t}\in dz)u_{\Delta }(y-z).
\end{equation*}%
Taking Laplace transform in $y$ we obtain 
\begin{equation*}
\int_{0}^{\infty }dye^{-\lambda y}h_{y}^{C}(t,\Delta )=be^{-t\psi (\lambda )}%
\frac{(1-e^{-\Delta \psi (\lambda )})}{\psi (\lambda )}.
\end{equation*}%
for any $t>0.$ Observe the identity 
\begin{equation*}
\mathbb{P}(W_{\lambda }\in dy)=\frac{\psi (\lambda )}{(1-e^{-\Delta \psi (\lambda )})}%
e^{-\lambda y}u_{\Delta }(y)dy,
\end{equation*}%
with $W_{\lambda }$ as defined in the previous proof. We deduce therefrom
the identity 
\begin{equation}\label{AF}
b\mathbb{P}(Y_{t}^{\lambda }+W_{\lambda }\in dy)=e^{t\psi (\lambda )}\frac{\psi
(\lambda )}{(1-e^{-\Delta \psi (\lambda )})}e^{-\lambda y}h_{y}^{C}(t,\Delta
)dy,\qquad y\geq 0.
\end{equation}%
The Fourier transform of the left most term in the
above equation is integrable because of the inequality $$\left|\frac{1-e^{-\Delta\psi(\lambda-i\theta)}}{\psi(\lambda-i\theta)}\right|\leq \left|\frac{1}{\psi(\lambda-i\theta)}\right|\sim\left|\frac{1}{b\theta}\right|,$$ the hypothesis (H), the Lemma~\ref{lemma02} and Proposition~2 in Chapter 1 in \cite{bertoinbook}.  We then deduce the identity 
\begin{equation}
\begin{split}
& s_{t}\frac{\psi (\rho_{t})}{(1-e^{-\Delta \psi (\rho_{t})})}e^{tH(\rho
_{t})}h_{x}^{C}(t,\Delta ) \\
& =\frac{b}{2\pi }\int_{-\infty }^{\infty }dz\mathbb{E}\left( \exp \left\{ iz\left(
\left( \frac{Y_{t}-\mu_{t}}{s_{t}}\right) +\frac{W_{\rho_{t}}}{s_{t}}%
\right) \right\} \right) .
\end{split}%
\end{equation}%
Using the arguments in the previous proofs we get that the rightmost term in
the above identity equals 
\begin{equation*}
\frac{b}{\sqrt{2\pi }}(1+o(1)),
\end{equation*}%
and the error term is uniform in $x$ and $\Delta .$ Which finishes the proof.
\end{proof}

\section{Proof of Proposition~\protect\ref{prop:rvcase}}

We repeat the calculation on page 9 with $\lambda =0$ to get%
\begin{equation*}
\hat{h}_{z}^{J}(t):=\int_{0}^{\infty }e^{izy}h_{y}^{J}(t)dy=e^{-t\psi (-iz)}%
\frac{\psi _{\ast }(-iz)}{-iz},
\end{equation*}%
so that 
\begin{equation}  \label{rho0}
th_{x}^{J}(t)=\frac{t}{2\pi }\int_{-\infty }^{\infty }e^{-ixz}e^{-t\psi
(-iz)}\frac{\psi _{\ast }(-iz)}{-iz}dz.
\end{equation}
The integral above is well defined since the hypothesis (H) ensures the
integrability in a neighbourhood of infinity, and that around zero follows
from the regular variation of $\overline{\Pi}$ at infinity. Indeed, the
regular variation of $\overline{\Pi}$ implies the finiteness of the integral 
\begin{equation*}
\int^{\infty}_{1}\frac{dz}{z}\overline{\Pi}(z)<\infty,
\end{equation*}
and some elementary calculations allow to deduce therefrom that 
\begin{equation*}
\int_{|z|<1}\left|\frac{\psi _{\ast }(-iz)}{-iz}\right|dz<\infty.
\end{equation*}
We write the RHS of (\ref{rho0}) as $I_{1}+I_{2,\text{ }}$where%
\begin{eqnarray*}
I_{1} &=&\frac{t}{2\pi }\int_{|z|\leq Kc(t)}e^{-ixz}e^{-t\psi (-iz)}\frac{%
\psi _{\ast }(-iz)}{-iz}dz \\
&=&\frac{1}{2\pi }\int_{|z|\leq K}e^{-izy_{t}}e^{-t\psi (-iz/c(t))}\frac{%
t\psi _{\ast }(-iz/c(t))}{-iz}dz \\
&=&\frac{1}{2\pi }\int_{|z|\leq K}e^{-izy_{t}}e^{-\tilde{\psi}(-iz)}\frac{%
\tilde{\psi}(-iz)}{-iz}dz+o(1),
\end{eqnarray*}%
where $\tilde{\psi}$ is the exponent of the limiting stable process $S,$ and
we use the fact that $t\psi _{\ast }(-iz/c(t))\backsim t\psi
(-iz/c(t))\rightarrow \tilde{\psi}(-iz)$ uniformly on $[-K,K]$. Clearly%
\begin{equation*}
\lim_{K\rightarrow \infty }|\int_{|z|>K}e^{-\tilde{\psi}(-iz)}\frac{\tilde{%
\psi}(-iz)}{-iz}dz|=0,
\end{equation*}%
so that%
\begin{equation*}
\lim_{K\rightarrow \infty }\lim_{t\rightarrow \infty }|I_{1}-\tilde{h}%
_{y_{t}}(1)|=0,\text{ uniformly in }y_{t}.
\end{equation*}%
The result follows because, for any fixed $K$%
\begin{equation*}
\begin{split}
&\lim_{t\rightarrow \infty }|I_{2}| \\
&\leq \lim_{t\to\infty} te^{-(t-t_{0})\kappa }\int_{|z|>Kc(t)}\exp \left\{
-t_{0}\int_{0}^{\infty }(1-\cos (zy))\Pi (dy)\right\} \frac{|\psi (-iz)|}{z}%
dz \\
&=0,
\end{split}%
\end{equation*}
where $\kappa =\liminf_{|z|\to\infty} \int_{0}^{\infty }(1-\cos (zy))\Pi
(dy)>0,$ by the strongly non-lattice assumption.

Similarly we have the representation%
\begin{equation*}
c(t)h_{x}^{C}(t,\Delta )=\frac{bc(t)}{2\pi }\int_{-\infty }^{\infty
}e^{-ixz}e^{-t\psi (-iz)}\frac{(1-e^{-\Delta \psi (-iz)})}{\psi (-iz)}dz,
\end{equation*}%
the integrability following from (H) and the bound%
\begin{equation*}
\begin{split}
\left|\frac{(1-e^{-\Delta \psi (-iz)})}{\psi (-iz)}\right| &\leq \left|\frac{%
1}{-ibz+\psi _{\ast }(-iz)}\right| \\
&\backsim \frac{1}{b|z|}\text{ as }|z|\rightarrow \infty .
\end{split}%
\end{equation*}

Again we have the uniform estimate,%
\begin{equation*}
\begin{split}
&\frac{bc(t)}{2\pi }\int_{|z|\leq Kc(t)}e^{-ixz}e^{-t\psi (-iz)}\frac{%
(1-e^{-\Delta \psi (-iz)})}{\psi (-iz)}dz \\
&=\frac{b}{2\pi }\int_{|z|\leq K}e^{-izy_{t}}e^{-t\psi (-iz/c(t))}\frac{%
(1-e^{-\Delta \psi (-iz/c(t))})}{\psi (-iz/c(t))}dz \\
&=\frac{b\Delta }{2\pi }\int_{|z|\leq K}e^{-izy_{t}}e^{-t\tilde{\psi}%
(-iz)}dz+o(1),
\end{split}%
\end{equation*}

and the proof is concluded as before.

\end{document}